\theoremstyle{plain}
\newtheorem{theorem}{Theorem}[section]
\newtheorem{corollary}{Corollary}[section]
\newtheorem{proposition}{Proposition}[section]
\newtheorem{definition}{Definition}[section]
\newtheorem{example}{Example}[section]
\theoremstyle{remark}
\journal{}
\def\ps@pprintTitle{%
 \let\@oddhead\@empty
 \let\@evenhead\@empty
 \def\@oddfoot{\centerline{\thepage}}%
 \let\@evenfoot\@oddfoot}
\begin{document}
\begin{frontmatter}
\title{Some Difference Graphs}

\author[add1]{M. A. Seoud}
\ead{m.a.seoud@hotmail.com}
\author[add1,add2]{M. M. Farid\corref{cor1}}
\ead{mahmoud.faried@bue.edu.eg}
\author[add1]{M. Anwar}
\ead{mohammed.anwer@hotmail.com}

\cortext[cor1]{Department of Mathematics, Faculty of Science, Ain Shams University, Cairo, Egypt }
\address[add1]{Department of Mathematics, Faculty of Science, Ain Shams University, Cairo, Egypt }
\address[add2]{Basic Sciences Department, Faculty of Engineering, The British University in Egypt, Cairo, Egypt}
\begin{abstract}
In this paper, we discuss difference labeling of some standard families of graphs. We prove that Star, Butterfly, Bistar, umbrella and Olive tree are difference graphs. We also introduce  difference labelings for some snakes (double triangular snake, irregular triangular snake, alternate $C_n$ snake). Furthermore we introduce a corollary helps us to find a unique difference labeling for the complete graph $K_3$ and all forms of difference labeling for the Star graph. Also this corollary can be used to prove that the complete bipartite graph $K_{2,4}$ is not a difference graph but the proof is very lengthy.
\end{abstract}
\begin{keyword}
Difference graph; Graph labelling.
\end{keyword}
\end{frontmatter}

\section{Introduction}

The notion of the autograph was introduced by \ G. S. Bloom, P. Hell, and H. Taylor \citep{1}. Harary \citep{2} called the autograph a difference graph. S. Bloom, Hell, and Taylor \citep{1} have shown that the following graphs are difference graphs: trees, $C_{n};K_{n};K_{n,n};K_{n,n-1},$ pyramids, and $n$-prisms. Gervacio \citep{3} proved that wheels $W_{n}$ are difference graphs if and only if $n=3;4,$ or $6$. Sonntag \citep{4} proved that cacti with girth at least $6$ are difference graphs, and he conjectured that all cacti are difference graphs. Sugeng and Ryan \citep{5} have provided difference labelings for cycles; fans; cycles with chords; graphs obtained by the one-point union of $K_{n}$ and $P_{m}$; and
graphs made from any number of copies of a given graph $G$ that has a difference labeling by identifying one vertex of the first with a vertex of the second, a different vertex of the second with the third and so on, In \citep{6}, Seoud and Helmi provided a survey of all graphs of order at most 5 and showed that the following graphs are difference graphs: $K_{n}$ for $n\geq 4$ with two deleted edges having no vertex in common; $K_{n}$ for $n\geq 6$ with three deleted edges having no vertex in common; gear graphs $G_{n}$ for $n \geq 3$; $P_m \times P_n$ for $m,n \geq 2$; triangular snakes; $C_4$-snakes; dragons; graphs consisting of two cycles of the same order joined by an edge, and graphs obtained by identifying the center of a star with a vertex of a cycle. The paper is organized as follows, the next section is devoted to some basic concepts. Some difference labellings for some graphs are introduced in Section 3.

In this paper we are following the basic definitions and notations for graph theory as in \citep{7,8,9,10}.

\section{Basic Concepts}
In this section we will recall some basic definitions and an important proposition.
\begin{definition}
A graph $G(V,E)$ is called a difference graph if there is a bijective map $f$ from $V$ to a set of positive integers $S$ such that $xy\in E$ if and only if $\left\vert f(x)-f(y)\right\vert \in S$, and $S$ is said to be the signature of $G$.
\end{definition}
\begin{definition}
A triangular snake $T_{n}$ is obtained from a path $P_{n}$ by replacing each edge of the $P_{n}$ by a cycle $C_{3}.$
\end{definition}

\begin{definition}
An alternate triangular snake $A(T_{n})$ is obtained from a path $P_{n}$ by replacing each alternate edge of $P_{n}$ by a cycle $C_{3}.$
\end{definition}

\begin{definition}
A double triangular snake $DT_{n}$ consists of two triangular snakes that have a common path.
\end{definition}
\begin{definition}
A $C_{n}$ snake is the graph obtained from a path $P_{n}$ by replacing each edge of the $P_{n}$ by a cycle $C_{n}.$
\end{definition}
\begin{definition}
An alternate $C_{n}$ snake $A(C_{n}\ snake)$ is the graph obtained from a path $P_{n}$ by replacing every alternate edge of the $P_{n}$ by a cycle $C_{n}$.
\end{definition}
\begin{definition}
An Olive tree $(T_{k})$ is a rooted tree consisting of $k$ branches where the $i^{th}$ branch is a path of length $i$.
\end{definition}
\begin{proposition}
\citep{2}
\end{proposition}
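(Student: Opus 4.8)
\emph{Proof strategy.} The plan is to prove the statement directly from the definition of a difference labeling, isolating the one elementary observation on which everything hinges. Let $f\colon V(G)\to S$ be a difference labeling, write $d=\max S$, and let $v$ be the vertex with $f(v)=d$. The key remark is that for any other vertex $u$, putting $s=f(u)$ one has $d+s>d=\max S$, so $d+s\notin S$, while $0<d-s<d$. Hence $uv\in E(G)$ if and only if $d-s\in S$; in other words the labels of the neighbours of $v$ form the set $\{\,s\in S\setminus\{d\}: d-s\in S\,\}$, which, since $d-s<d$ and $d-(d-s)=s\in S$, is carried onto itself by the involution $s\mapsto d-s$.

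Next I would record the reduction that this makes available. Let $G'=G-v$ and $S'=S\setminus\{d\}$. Since every element of $S'$ is smaller than $d$, any two of them differ by less than $d$, so for $x,y\in V(G')$ the condition $|f(x)-f(y)|\in S$ is equivalent to $|f(x)-f(y)|\in S'$; therefore $f$ restricted to $V(G')$ is a difference labeling of $G'$ with signature $S'$. Thus the extreme-label vertex may always be peeled off, which is exactly what licenses induction on $|V(G)|$ in the later sections, while the symmetry $s\mapsto d-s$ of the neighbour set (together with the fact that this set has size $\deg v$) is the statement I would package as the announced corollary.

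The argument itself is short, so the difficulty is not in proving the proposition but in applying it cleanly afterwards. The point to handle carefully is the bookkeeping of the map $s\mapsto d-s$ when $v$ is isolated — the neighbour set is then empty, i.e.\ $S$ avoids $S+s$ for every $s\in S$ — and, more seriously, when one must decide which pairs of labels simultaneously realise each prescribed value $d-s$; that combinatorial case-chase is what makes the $K_{2,4}$ deduction lengthy, whereas for $K_3$ and for the star the involution is rigid enough to enumerate all admissible signatures outright.
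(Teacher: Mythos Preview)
You have proved a correct and useful fact, but not the proposition. The proposition (the three enumerated items following it in the paper) classifies \emph{all} adjacencies in a difference graph: (1) labels $s$ and $2s$ are always adjacent; (2) if $r,t,r+t\in S$ then the vertices labeled $r$ and $t$ are each adjacent to the vertex labeled $r+t$; (3) every edge is of one of these two kinds. The paper's proof is a one-line case split: if vertices labeled $a<b$ are adjacent then $b-a\in S$, so either $b-a=a$ (type~1) or $b-a$ is some other label $c$, whence $b=a+c$ and the edge is of type~2.

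What you establish instead concerns only the vertex $v$ carrying the maximum label $d$: its neighbour-label set is invariant under $s\mapsto d-s$, and deleting $v$ leaves a difference graph with signature $S\setminus\{d\}$. Both observations are correct, and the first is essentially the content of parts (iii)--(iv) of Corollary~3.1, not of Proposition~2.1. Your argument says nothing about an edge between two non-maximal labels, so item~(3) of the proposition is simply not addressed. To fix this, drop the restriction to the maximum label: for \emph{any} edge $xy$ with $f(x)<f(y)$, the number $f(y)-f(x)$ lies in $S$, and distinguishing whether it equals $f(x)$ or some third label already gives the dichotomy.
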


\begin{enumerate}
\item Vertex label values $s$ and $2s$ belong to adjacent vertices (first type),
\item Vertex label values $r$ and $t$ belong to vertices adjacent to a vertex labeled $r+t$ (second type),
\item No other adjacency occur in difference graphs.

\begin{proof}
(1) and (2) are obvious. To prove (3), note that if the vertices with labels $r$ and $r+t$ are adjacent, then$\left\vert (r+t)-r\right\vert =t$ belongs to $S$. Hence, either $t=r$ and we have an edge of the first kind, or $t\neq r $ and $\left\vert (r+t)-r\right\vert =t\in S$, i.e., we have an edge of the second kind.
\end{proof}
\end{enumerate}

\section{New Results}

\begin{corollary}
Let $S$ be the signature of a difference graph $G$,

\item[i)] For all $s\in S$,  $s \in \{2a,\frac{a}{2},a+b,|a-b|\}$ for some $a,b \in S $.
\item[ii)] The minimum label must be $\frac{a}{2}$ or $|a-b|$ for some $a$ and $b \in S $.
\item[iii)] The maximum label must be $2a$ or $a+b$ for some $ a,b \in S$.
\item[iv)] The degree of the vertex with the maximum label $s$ is odd if and only if it is adjacent to a vertex labeled by $\frac{s}{2}$.

\end{corollary}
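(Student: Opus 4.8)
The corollary has four parts, and I would prove them in order, leveraging the earlier Proposition (from Harary) that every edge of a difference graph is of the "first type" (labels $s$ and $2s$) or the "second type" (labels $r$ and $t$ adjacent to a vertex labeled $r+t$). Here is the plan.

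For part (i): Fix $s\in S$. Since $S$ is the signature of a difference graph $G$ and $f$ is a bijection from $V$ onto $S$, there is a vertex $v$ with $f(v)=s$. Because $s\in S$, the defining property of a difference graph forces $v$ to have at least one neighbour (indeed $v$ is adjacent to any vertex whose label differs from $s$ by an element of $S$; and one must argue such a vertex exists — this is where I expect the first small obstacle). The cleanest way is: the smallest element $m=\min S$ lies in $S$, and the vertex labelled $m$ is adjacent to the vertex labelled $m+m=2m$ if $2m\in S$, or more generally every element of $S$ that is $\le \max S - m$ witnesses an edge; so $G$ has no isolated vertices provided $|S|\ge 2$ (the case $|S|=1$ is degenerate and can be dismissed). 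Now take an edge at $v$. By the Proposition this edge is either of the first type, so $s=2a$ with $a\in S$ or $s=\frac a2$ with $a=2s\in S$; or of the second type, so either $v$ is the "apex" labelled $r+t$ (giving $s=a+b$ with $a=r,b=t\in S$) or $v$ is one of the two base vertices, say labelled $r$, adjacent to the apex labelled $r+t$, in which case $t=(r+t)-r\in S$ and $s=r=|(r+t)-t|=|a-b|$ with $a=r+t\in S$, $b=t\in S$. That exhausts the cases and gives $s\in\{2a,\tfrac a2,a+b,|a-b|\}$.

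For parts (ii) and (iii): Let $m=\min S$ and $M=\max S$. Apply part (i) to $s=m$. The option $s=2a$ is impossible because $2a\ge 2m>m$ (using $m>0$); the option $s=a+b$ is impossible because $a+b\ge 2m>m$. Hence $m=\tfrac a2$ or $m=|a-b|$, which is (ii). Symmetrically, apply (i) to $s=M$: the option $s=\tfrac a2$ is impossible since $\tfrac a2\le \tfrac M2<M$, and $s=|a-b|<M$ is impossible since $|a-b|\le M - m<M$. Hence $M=2a$ or $M=a+b$, which is (iii).

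For part (iv): Let $v$ be the vertex with $f(v)=M=\max S$. I want to show $\deg v$ is odd iff $v$ is adjacent to the vertex labelled $\tfrac M2$. The key structural fact is that, since $M$ is maximal, $v$ cannot be the apex of a second-type edge (that would need a neighbour labelled $>M$), so every edge at $v$ is either the first-type edge $\{M,\tfrac M2\}$ (which exists iff $\tfrac M2\in S$, and is unique), or a second-type edge in which $v$ is a base vertex: $v$ labelled $M$ and some $w$ labelled $c$ are both adjacent to a common vertex labelled $M+... $ — wait, the apex would be labelled $M+c>M$, impossible — so in fact $v$ as a base vertex needs the apex labelled $M$, i.e. $v$'s two base-partners are labelled $r,t$ with $r+t=M$; equivalently $v$ is adjacent to $w$ labelled $c$ precisely when $M-c\in S$ as well, and then $\{$label $c$, label $M-c\}$ are the two neighbours coming from the apex $v$. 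So the neighbours of $v$ other than (possibly) $\tfrac M2$ pair up $\{c, M-c\}$ with $c\neq M-c$; the only self-paired value would be $c=M-c$, i.e. $c=\tfrac M2$. Therefore the neighbours of $v$ split into pairs, except for the single vertex labelled $\tfrac M2$ when it is present and adjacent to $v$. Counting: $\deg v$ is even when $v$ is not adjacent to $\tfrac M2$, and odd when it is. I should double-check the edge-cases (e.g. whether $\tfrac M2$, when it exists, is automatically adjacent to $v$ as a first-type edge — yes, by definition of difference graph, since $|M-\tfrac M2|=\tfrac M2\in S$) to make sure the "first type" neighbour and the putative self-paired "second type" partner coincide rather than double-count.

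**Main obstacle.** The routine casework in (i) and the sign/strict-inequality bookkeeping in (ii), (iii) are straightforward. The delicate point is (iv): I must argue carefully that \emph{every} adjacency at the max-label vertex is accounted for by exactly the two mechanisms above (no apex role, base-pairs $\{c,M-c\}$, plus the unique first-type neighbour $\tfrac M2$), and that the pairing $c\leftrightarrow M-c$ is a genuine involution on the neighbour set with $\tfrac M2$ as its only possible fixed point — in particular that we never accidentally count a vertex twice. That parity argument is the heart of the proof.
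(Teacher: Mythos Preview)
Your plan follows the paper's proof: parts (i)--(iii) come directly from the edge-type Proposition (the paper proves (ii) and (iii) by picking a neighbour of the extremal vertex directly rather than routing through (i), but the content is identical), and (iv) is the same parity argument pairing the neighbours of the maximum vertex as $c\leftrightarrow M-c$. Two points to clean up before writing it out.

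In (iv) you have inverted ``apex'' and ``base'' at the outset: the max-label vertex $v$ \emph{can} be the apex (labelled $r+t=M$, with neighbours labelled $r,t<M$); it is the \emph{base} role that is impossible for $v$, since the apex would then be labelled $M+c>M$. Your stream-of-consciousness eventually lands on the correct pairing, but the intermediate sentences contradict one another and need straightening.

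In (i), your attempt to rule out isolated vertices does not work: from $a\le\max S-m$ you cannot infer $a+m\in S$, and indeed $S=\{1,4\}$ is the signature of a difference graph in which both vertices are isolated, so (i) already fails for $s=1$ there. The paper simply ignores this issue (it tacitly assumes the relevant vertex has a neighbour), so you are being more careful than the original; but the gap cannot be closed without an extra hypothesis such as connectedness or the absence of isolated vertices.
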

\begin{proof}\begin{itemize}
\item [i)] It is clear using proposition $2.1$.
\item [ii)] Let $c$ be the minimum label in $G$ and let the vertex labeled by $c$ be  adjacent to  the vertex labeled by $a$ , hence $a-c\in S$, therefore, either $a-c=c$ i.e. $c=\frac{a}{2}$, or $a-c=b$ for some $b\in S$, which implies that $c=a-b.$
\item[iii)] Let $c$ be the maximum label in the $G$ and let the vertex labeled by $c$ be  adjacent to  the vertex labeled by $a$, hence $c-a\in S$, therefore, either $c-a=a$ i.e. $c=2a$, or $c-a=b $ for some $b\in S$, which implies that $c=a+b.$
\item[iv)] We have two types of labelings in the Difference graph. In case of the first type, the vertex with maximum label is adjacent to a vertex labeled by $\frac{s}{2},$ therefore it shares $1$ in the degree of the vertex with maximum label. So, the statement is done. In case of the existence of second type, since the vertex with maximum label is adjacent to two vertices such that the sum of their labels is $s$, it shares multiples of $2$ in the degree of the vertex with maximum label.
\end{itemize}
\end{proof}


\begin{theorem}
The complete graph $K_{3}$ is a difference graph with a unique signature form $S=\left\{ 3a,2a,a\right\} .$
\end{theorem}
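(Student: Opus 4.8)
The plan is to argue directly from the definition of a difference graph together with the Corollary above. Since $K_3$ has three vertices and $f$ is a bijection onto its signature, any signature $S$ of $K_3$ is a set of exactly three positive integers; write $S=\{x,y,z\}$ with $x<y<z$. Because every pair of vertices of $K_3$ is an edge, all three pairwise differences $y-x$, $z-y$ and $z-x$ must lie in $S$. The computation therefore reduces to deciding which difference equals which label.

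The key observation is that the largest of the three differences is $z-x$, and $0<z-x<z$ since $x>0$; hence $z-x\in\{x,y\}$ (it cannot be $z$). If $z-x=x$, i.e.\ $z=2x$, then $x<y<2x$ forces $0<y-x<x$, so $y-x$ cannot be an element of $S$, whose least element is $x$ — a contradiction. This leaves $z-x=y$, i.e.\ $z=x+y$; equivalently, by part (iii) of the Corollary the maximum label is $2\alpha$ or a sum $\alpha+\beta$ with $\alpha,\beta\in S$, and only $z=x+y$ survives. Now consider $y-x$: it satisfies $0<y-x<y<z$, so the only element of $S$ it can equal is $x$, giving $y=2x$ and then $z=x+y=3x$. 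Setting $a=x$ yields $S=\{a,2a,3a\}$, and this is the only possible form. Conversely, for every positive integer $a$, labelling the three vertices of $K_3$ by $a$, $2a$, $3a$ is a valid difference labelling, since the pairwise differences $a$, $a$, $2a$ all belong to $\{a,2a,3a\}$; hence $K_3$ is a difference graph. (The same conclusion is consistent with part (ii) of the Corollary, which forces the minimum label to be $\tfrac{a}{2}$ or $|a-b|$: here it is $2a-a=a$.)

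I expect no serious obstacle: the whole argument is a short finite case analysis, and the only point requiring care is the bookkeeping of which difference equals which label. Fixing the order $x<y<z$ at the outset collapses this to the two cases $z-x=x$ and $z-x=y$, and minimality of $x$ in $S$ kills the first immediately; verifying that $\{a,2a,3a\}$ genuinely realises $K_3$ is routine.
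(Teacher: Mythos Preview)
Your proof is correct and follows essentially the same line as the paper's: both argue from the maximum label, deduce that it must equal the sum of the other two labels, and then force the remaining difference to coincide with the smallest label. Your version is somewhat more thorough in that it explicitly disposes of the case $z=2x$ and verifies the converse, whereas the paper simply names the maximum $3a$ at the outset and proceeds from there.
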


\begin{proof}
Let $u_{1}$ be the vertex with the maximum label $3a$, then from the corollary $3.1,$ the label of the vertices $u_{2}$ and $u_{3}$ must be $\left\{ b,3a-b\right\} $. Without loss of generality one can assume that $3a-b>b$ , therefore $3a-b-b=3a-2b\in S.$ Consequently, $3a-2b=b,$  which gives $a=b.$ Then, the labels of $u_{2}$ and $u_{3}$ will be $\left\{ a,2a\right\} .$
\end{proof}

\begin{theorem}
The star graph is a difference graph. Moreover, $S$ is a signature of the star graph if and only if it has the one of the forms:
\end{theorem}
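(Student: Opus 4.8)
The plan is to take an arbitrary difference labelling of the star, read off everything forced by Corollary 3.1, and then check that every configuration surviving those constraints is actually realisable. Write $K_{1,n}$ for the star, let $m$ be the label of the centre and $L$ the set of the $n$ leaf labels, so $S=\{m\}\cup L$; throughout, the key features are that the centre is adjacent to \emph{every} leaf and that the leaves are pairwise non-adjacent.

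First I would locate the leaves relative to $m$. If some leaf has label $a>m$ then $a-m\in S$; it cannot equal $m$ unless $a=2m$, and if $a-m$ equals another leaf label $a'$ then $|a-a'|=m\in S$, forcing those two leaves to be adjacent — impossible. Hence the only leaf that may exceed $m$ is one labelled $2m$, and there is at most one (the labelling is injective). For a leaf label $a<m$ one has $m-a\in S$ and $m-a\neq m$, so $m-a$ is again a leaf label; therefore the leaf labels below $m$ fall into pairs $\{a,\,m-a\}$, with at most one self-paired value $m/2$. Thus $L$ is made up of pairs summing to $m$, possibly the value $m/2$, and possibly the value $2m$ — which already fixes the shape of $S$, and, via Corollary 3.1(iv) applied to the vertex of largest label, ties the parity of $n$ to whether $m/2$ occurs.

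Next I would convert ``the leaves are independent'' into explicit exclusions. For leaf labels $x\neq y$ we need $|x-y|\notin S$; every leaf--leaf difference is less than $m$, and a one-line check shows it is never equal to $m$, so the requirement says exactly that no leaf label is the difference of two leaf labels. One then notes that the large labels are harmless: for a pairing value $a$ the quantities $2m-a$ and $m+a$, and also $3m/2$, all lie strictly between $m$ and $2m$ and hence outside $S$. So all genuine restrictions fall on the sub-collection of leaf labels that are $<m$, and they amount to a short list of inequalities among the chosen ``small'' parameters (of the type $3a\neq m$, $a_i+2a_j\neq m$, $2a_i\neq a_j$, and, when $m/2$ is present, $a_i+a_j\neq m/2$). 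Organising these according to $n\bmod 2$ and the optional presence of $m/2$ and $2m$ produces precisely the list of forms in the statement.

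For the converse I would start from $S$ in each listed form and verify the three things a difference labelling of $K_{1,n}$ requires: the $n$ centre--leaf differences all lie in $S$ (immediate, since $a$ is partnered by $m-a$, $m/2$ by itself, and $2m$ by $m$), no two leaves differ by an element of $S$ (this is the sum-free condition together with the automatic exclusions just noted), and the labels are distinct positive integers. Exhibiting one explicit instance for each $n$ — e.g.\ take $m$ large and the small parameters a rapidly growing, hence difference-free, sequence together with their complements $m-a_i$ — also settles the first assertion that every star is a difference graph. I expect the bookkeeping in the third paragraph, namely producing a clean non-redundant list of the avoidance inequalities and checking that the case split by $m/2$ and $2m$ is exhaustive, to be the main obstacle; the structural part in the second paragraph is essentially forced by Corollary 3.1.
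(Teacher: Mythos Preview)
Your approach is essentially the paper's: both use Corollary~3.1 to force the leaf labels below the centre to pair as $\{b,\,m-b\}$, with the optional values $m/2$ and $2m$ accounting for the parity of $n$ and for the two signature forms in each case. The paper organises the argument by casing on whether the maximum label sits at the centre or at a leaf (invoking 3.1(iv) at the degree-$1$ vertex in the latter case), whereas you anchor everything at the centre label $m$ and argue directly that at most one leaf can exceed it; your treatment of the converse and of the leaf--leaf non-adjacency exclusions is also considerably more explicit than the paper's, which leaves those conditions rather loosely stated.
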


\begin{enumerate}
\item[i)] If $n$ is even, then $S=\left\{ a,b_{i},a-b_{i}\right\}$, such that $a$ is the maximum label and  $\left\vert b_{i}-(a-b_{i})\right\vert \notin S$, or  $S=\{4a,2a,a,b_{i},2a-b_{i}\}$ such that the difference between any two elements of \\ $\left\{ 4a,a,b_{i},2a-b_{i}\right\} \notin S.$
\item[ii)] If $n$ is odd, then $S=$\{$2a,a,b_{i},2a-b_{i}\}$ such that the difference between any two elements of $\left\{ a,b_{i},2a-b_{i}\notin S\right\}$ or  $S=\{2a,a,b_{i},a-b_{i}\}$ such that the difference between any two elements of $\left\{ 2a,b_{i},a-b_{i}\right\} \notin S.$
\end{enumerate}

\begin{proof}
i) Let $S_{n}$ be a star graph where $n$ is even integer, let $u_{0}$ be the vertex of degree $n$ with a maximum label $a$ , then from the corollary $3.1$ the label of $\left\{ u_{1},u_{2},\cdots ,u_{n}\right\} $ must be $\left\{ b_{i},a-b_{i}\right\} $ such that $\left\vert b_{i}-(a-b_{i})\right\vert \neq b_{j}$ or $a-b_{j}.$ On the other hand, let $u_{1}$ be any vertex of degree $1$ with a maximum label $4a$ , then from the corollary $3.1,$ and where the degree of $u_{1}$ is one, the label of $u_{\circ }$ must be $2a,$\ since no vertex from $u_{2},u_{3},\cdots ,u_{n}$ can be labeled bigger than the label of $u_{\circ }$ (if that happened its label must be $4a $ which is rejected).Then $u_{\circ }$\ will be the vertex with maximum label of the vertices $\left\{ u_{\circ },u_{2},u_{3},\cdots ,u_{n}\right\} $, since the number of elements of the set $\left\{ u_{2},u_{3},\cdots ,u_{n}\right\} $ is odd. Then from corollary $3.1$ the labels of the vertices $\left\{ u_{2},u_{3},\cdots ,u_{n}\right\} $ will be \{$a,b_{i},2a-b_{i}$ ; such that the differences between any two elements of $\left\{ 4a,a,2a-b_{i}\notin S\right\} .$
ii) By the same way.
\end{proof}




\begin{example}
 A difference labeling of the star graph $S_{8}$ is illustrated in Fig.\ref{figure3}
\end{example}

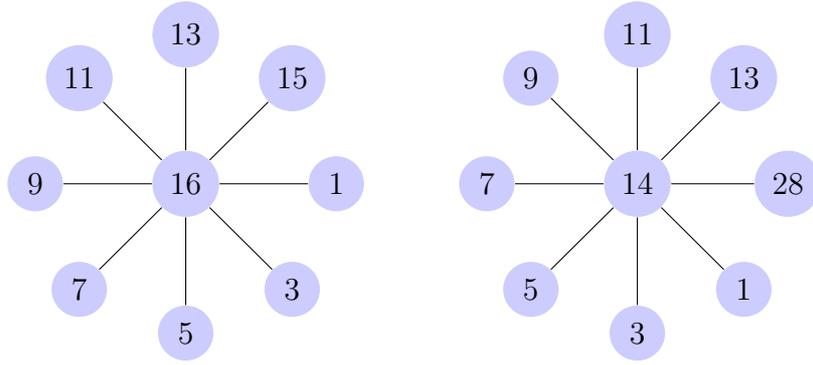
\begin{figure}[h!]
\centering
\begin{tikzpicture}
[scale=.7,auto=center,every node/.style={circle,fill=blue!20}]
  \node (u0) at (0,0) {16};
 \node (u1) at ({sqrt(8)},0) {1};
  \node (u2) at (2,2) {15};
  \node (u3) at (0,{sqrt(8)}){13};
 \node (u4) at (-2,2)  {11};
  \node (u5) at ({-sqrt(8)},0) {9};
\node (u6) at (-2,-2) {7};
  \node (u7) at (0,{-sqrt(8)}) {5};
  \node (u8) at (2,-2) {3};

 \draw (u0) -- (u1);
  \draw (u0) -- (u2);
  \draw (u0) -- (u3);
  \draw (u0) -- (u4);
 \draw (u0) -- (u5);
\draw (u0) -- (u6);
  \draw (u0) -- (u7);
\draw (u0) -- (u8);

  \node (v0) at ({3*sqrt(8)},0) {14};
 \node (v1) at ({4*sqrt(8)},0) {28};
  \node (v2) at ({2+3*sqrt(8)},2) {13};
  \node (v3) at ({3*sqrt(8)},{sqrt(8)}){11};
 \node (v4) at ({-2+3*sqrt(8)},2)  {9};
  \node (v5) at ({2*sqrt(8)},0) {7};
\node (v6) at ({-2+3*sqrt(8)},-2) {5};
  \node (v7) at ({3*sqrt(8)},{-sqrt(8)}) {3};
  \node (v8) at ({2+3*sqrt(8)},-2) {1};

 \draw (v0) -- (v1);
  \draw (v0) -- (v2);
  \draw (v0) -- (v3);
  \draw (v0) -- (v4);
 \draw (v0) -- (v5);
\draw (v0) -- (v6);
  \draw (v0) -- (v7);
\draw (v0) -- (v8);
\end{tikzpicture}
  \caption{Star graph $S_8$ labeling}\label{figure3}
\end{figure}
\begin{theorem}
The Butterfly graph is a difference graph.
\end{theorem}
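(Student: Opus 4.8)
The plan is to give an explicit difference labeling and then verify it directly; here we only need existence, not a classification of all signatures. Recall that the butterfly (bowtie) graph $B$ is the union of two triangles sharing one vertex: write $V(B)=\{v_0,v_1,v_2,v_3,v_4\}$, where $v_0$ is the common vertex (of degree $4$), $\{v_0,v_1,v_2\}$ and $\{v_0,v_3,v_4\}$ are the two triangles, and the only non-adjacent pairs are $v_1v_3$, $v_1v_4$, $v_2v_3$, $v_2v_4$. Thus a signature $S$ must contain the six ``triangle'' differences and avoid the four ``cross'' differences.

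First I would observe that for any candidate set $S$ containing $a,2a,3a$, the three labels $a,2a,3a$ automatically induce a triangle, since their pairwise differences are $a,a,2a\in S$. This suggests assigning $f(v_0)=3a$ and $\{f(v_1),f(v_2)\}=\{a,2a\}$, and then searching for $f(v_3),f(v_4)$ with $|3a-f(v_3)|,\,|3a-f(v_4)|,\,|f(v_3)-f(v_4)|\in S$ while keeping the four cross differences outside $S$. Carrying out this short search (and taking $a=1$) produces the labeling $f(v_0)=3$, $\{f(v_1),f(v_2)\}=\{1,2\}$, $\{f(v_3),f(v_4)\}=\{6,9\}$, with signature $S=\{1,2,3,6,9\}$. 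The verification is then two quick checks: the six intended edges give the differences $|3-1|=2$, $|3-2|=1$, $|3-6|=3$, $|3-9|=6$, $|1-2|=1$, $|6-9|=3$, all in $S$; and the four intended non-edges give $|1-6|=5$, $|1-9|=8$, $|2-6|=4$, $|2-9|=7$, none of which lies in $S$. Since $f$ is a bijection onto a five-element set of positive integers, $B$ is a difference graph with signature $\{1,2,3,6,9\}$.

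I expect the only delicate point to be the search for the second triangle: the cross differences between the two triangles have a stubborn tendency to land in $S$ (for instance, the symmetric-looking choice $\{3a,4a,5a\}$ for the second triangle fails at once because $|a-4a|=3a\in S$), so one has to spread the second triangle out --- here to $\{3a,6a,9a\}$ --- in order that all four cross differences $5a,8a,4a,7a$ miss the set. Once that choice is made everything is immediate, and Corollary 3.1 and Proposition 2.1 serve only as navigational aids for locating the labeling rather than being invoked in the final argument.
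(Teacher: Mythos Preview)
Your argument is clean and your signature $\{1,2,3,6,9\}$ does indeed realize the five-vertex bowtie as a difference graph. The issue is not with the reasoning but with the object: in this paper the ``Butterfly graph'' is \emph{not} the bowtie. It is defined (via the accompanying figure) as a two-parameter family: a central vertex $w_0$ joined to every vertex of two paths $w_1,u_0,u_1,\dots,u_n$ and $v_0,v_1,\dots,v_m$, with the path edges retained. In other words it is two fans of arbitrary sizes glued at their apex; the bowtie is merely the smallest instance (top path of length~$1$, bottom path of length~$1$).

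Consequently your proof establishes only a single special case of the theorem. The paper's proof instead writes down a uniform labeling valid for all $n,m$: $f(w_0)=6$, $f(w_1)=2$, $f(u_i)=4+6i$, $f(v_j)=3+6j$. The point of this construction is that differences along each path are the constant~$6=f(w_0)$, differences from $w_0$ to a path vertex reproduce the label of the neighboring path vertex, and the two arithmetic progressions $\{4+6i\}$ and $\{3+6j\}$ are chosen in distinct residue classes mod~$6$ so that no unwanted cross-difference can land in $S$. To match the paper you would need to extend your labeling to handle arbitrary path lengths on both wings, and verify (for all $i,j,n,m$) that none of the $O(nm)$ non-adjacent pairs produce a difference in $S$; your bowtie check covers only four such pairs.
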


\begin{proof}
Let the Butterfly graph be described as indicated in Fig.\ref{figure4}.

\begin{figure*}[h!]
    \centering
        \begin{tikzpicture}
  [scale=.9,auto=center,every node/.style={circle,fill=blue!20}]

\node(w0) at (0,0){$w_0$};
\node(u0) at (-4,2){$w_1$};
\node(u1) at (-2,2){$u_0$};
\node(u2) at (0,2){$u_1$};
\node(u3) at (2,2){$u_2$};
\node(un) at (4,2){$u_n$};

\node(v0) at (-4,-2){$v_0$};
\node(v1) at (-2,-2){$v_1$};
\node(v2) at (0,-2){$v_2$};
\node(v3) at (2,-2){$v_3$};
\node(vm) at (4,-2){$v_m$};

\draw(w0) -- (u0);
\draw(w0) -- (u1);
\draw(w0) -- (u2);
\draw(w0) -- (u3);
\draw(w0) -- (un);

\draw(w0) -- (v1);
\draw(w0) -- (v2);
\draw(w0) -- (v3);
\draw(w0) -- (vm);
\draw(w0) -- (v0);

\draw(u1) -- (u0);
\draw(u1) -- (u2);
\draw(u2) -- (u3);
\draw  [thick, dotted , -] (u3) -- (un);
\draw(u0) -- (u1);
\draw(v1) -- (v2);
\draw(v2) -- (v3);
\draw(v3) -- (vm);
\draw(v1) -- (v0);
\end{tikzpicture}
\caption{Butterfly graph.}
  \label{figure4}
\end{figure*}
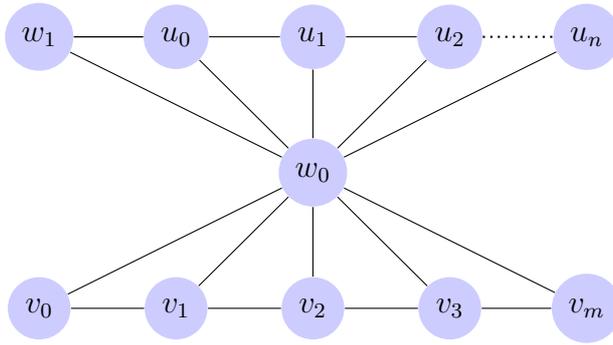
Define a labeling function $f:V($Butterfly graph$)\longrightarrow S$ (positive integers) as follows:
\begin{eqnarray*}
f\left( w_{0}\right) &=& 6.\\
f\left( w_{1}\right) &=& 2.\\
f\left( u_{i}\right) &=& 4+6i ,\ \ i=0,1,2,\ldots .\\
f\left( v_{j}\right) &=& 3+6j ,\  \ j=0,1,2,\ldots .
\end{eqnarray*}
\end{proof}

\begin{example}
A difference labeling of the butterfly graph is illustrated in  Fig.\ref{fig51}.
\end{example}

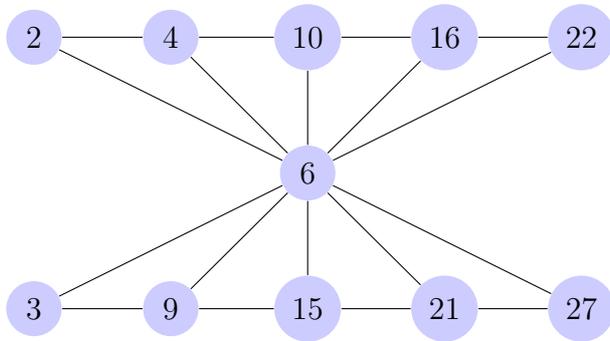
\begin{figure*}[h!]
        \centering
        \begin{tikzpicture}
  [scale=.9,auto=center,every node/.style={circle,fill=blue!20}]

\node(w0) at (0,0){6};
\node(u1) at (-4,2){2};
\node(u2) at (-2,2){4};
\node(u3) at (0,2){10};
\node(u4) at (2,2){16};
\node(u5) at (4,2){22};

\node(v1) at (-4,-2){3};
\node(v2) at (-2,-2){9};
\node(v3) at (0,-2){15};
\node(v4) at (2,-2){21};
\node(v5) at (4,-2){27};

\draw(w0) -- (u1);
\draw(w0) -- (u2);
\draw(w0) -- (u3);
\draw(w0) -- (u4);
\draw(w0) -- (u5);

\draw(w0) -- (v1);
\draw(w0) -- (v2);
\draw(w0) -- (v3);
\draw(w0) -- (v4);
\draw(w0) -- (v5);

\draw(u1) -- (u2);
\draw(u2) -- (u3);
\draw(u3) -- (u4);
\draw(u4) -- (u5);
\draw(v1) -- (v2);
\draw(v2) -- (v3);
\draw(v3) -- (v4);
\draw(v4) -- (v5);
\end{tikzpicture}
\caption{butterfly graph labeling.}
\label{fig51}
\end{figure*}

\begin{theorem}
The Bistar graph is a difference graph.
\end{theorem}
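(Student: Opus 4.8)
The plan is to exhibit an explicit difference labeling of the bistar $B_{m,n}$; recall it has two adjacent \emph{centers} $u$ and $v$, with $m$ pendant vertices $u_1,\dots,u_m$ attached to $u$ and $n$ pendant vertices $v_1,\dots,v_n$ attached to $v$. I would place everything in widely separated bands around $p$, $2p$ and $4p$ for a sufficiently large integer $p$ (any $p>3(m+n)$ works). Set $f(u)=2p$ and $f(v)=4p$; put $\lfloor m/2\rfloor$ pendants of $u$ at the labels $p-1,p-2,\dots,p-\lfloor m/2\rfloor$ and $\lfloor m/2\rfloor$ more at $p+1,\dots,p+\lfloor m/2\rfloor$, and, if $m$ is odd, the last pendant of $u$ at $p$ itself; symmetrically put $\lfloor n/2\rfloor$ pendants of $v$ at $2p-e_j$ and $\lfloor n/2\rfloor$ more at $2p+e_j$, where $e_j=2\lfloor m/2\rfloor+j$ for $j=1,\dots,\lfloor n/2\rfloor$, and, if $n$ is odd, the last pendant of $v$ at $8p$. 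The signature $S$ is this set of labels. By construction $|f(u)-f(v)|=2p=f(u)\in S$ (a first-type edge); $|f(u)-f(u_i)|$ is always $p$ or $p\pm(\text{a }u\text{-offset})$, hence again a pendant label of $u$; and $|f(v)-f(v_j)|$ is always $2p\pm e_j$ (again a pendant label of $v$) or, for the pendant $8p$, equals $4p=f(v)$. So every edge of $B_{m,n}$ is realised.

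It then remains to check that the labels are distinct and that every \emph{non}-edge has its difference outside $S$. Distinctness is immediate for $p$ large: the labels lie in the pairwise-disjoint intervals $[p-\lfloor m/2\rfloor,\,p+\lfloor m/2\rfloor]$ and $[2p-e_{\max},\,2p+e_{\max}]$ (with $e_{\max}$ of order $m+n$), together with the isolated values $2p$, $4p$, $8p$. The non-edges fall into three families: two pendants of the same center; a pendant versus the opposite center; and a pendant of $u$ versus a pendant of $v$. The corresponding differences are either ``small'' --- of order at most $m+n$, hence strictly less than $\min S=p-\lfloor m/2\rfloor$ --- or they lie near $3p$ (a pendant of $u$ against $v$, or the pendant $8p$ against a band-$p$ or band-$2p$ vertex), which is a gap in $S$, whose elements cluster near $p$, $2p$, $4p$, $8p$. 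In both cases the difference is not in $S$.

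The main obstacle is the third family: a pendant $p\pm d_i$ of $u$ against a pendant $2p\pm e_j$ of $v$, whose difference is $\approx p$ and so must be kept out of the cluster $\{p\}\cup\{p\pm d_{i'}\}$ of $u$-pendant labels. Unwinding, one needs $e_j\pm d_i$ and $|e_j-d_i|$ never to equal a $u$-offset $d_{i'}$ (all of which are $\le\lfloor m/2\rfloor$), and this is precisely why the $v$-offsets are taken to be $e_j=2\lfloor m/2\rfloor+j$: then $e_j-d_i\ge\lfloor m/2\rfloor+1$ and $e_j+d_i\ge 2\lfloor m/2\rfloor+1$, both exceeding $\lfloor m/2\rfloor$, so no such coincidence can happen and this family of non-edges is clear. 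Once that gap condition is secured, the rest is a routine bookkeeping of inequalities in $p$, $m$, $n$. (Since $B_{m,n}$ is a tree, the mere existence of a difference labeling also follows from \citep{1}; the point of the theorem is the explicit construction.)
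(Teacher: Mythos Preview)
Your construction is correct and the verification of edges and non-edges is carefully done (in particular, your choice $e_j=2\lfloor m/2\rfloor+j$ is exactly what is needed to keep the cross differences $p\pm(e_j\pm d_i)$ out of the $u$-band). However, the route is quite different from the paper's. The paper gives a single closed-form labeling with no free parameter: $f(u_0)=2n$, $f(u_i)=2i-1$ for the $n$ pendants of $u_0$, $f(v_j)=2n+j(2n+2)$ for the $m$ pendants of $v_0$, and $f(v_0)=f(u_0)+f(v_m)$. Here each difference $|f(u_0)-f(u_i)|$ is another odd label, each $|f(v_0)-f(v_j)|$ is $f(v_{m-j})$ (or $f(u_0)$ when $j=m$), and $|f(u_0)-f(v_0)|=f(v_m)$, so the edge set is immediate; the non-edge check (which the paper omits) is a short parity/size argument. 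Your band construction around $p,2p,4p,8p$ is more elaborate and parity-sensitive, but it buys a fully written-out non-edge analysis and makes transparent \emph{why} the labeling works: the bands separate the difference ranges. The paper's labeling is terser and avoids the case split on parity, at the cost of leaving the non-edge verification to the reader. Your closing remark that $B_{m,n}$ is a tree, so existence already follows from \cite{1}, is apt and not mentioned in the paper.
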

\begin{proof}
Let the Bistar graph be described as indicated in Fig.\ref{figure55}.
\begin{figure*}[h!]

    \centering

        \begin{tikzpicture}
  [scale=.9,auto=center,every node/.style={circle,fill=blue!20}]
\node(u0) at (0,0){$u_0$};
\node(u1) at (-4,2){$u_1$};
\node(u2) at (-2,2){$u_2$};
\node(u3) at (0,2){$u_3$};
\node(u4) at (2,2){$u_4$};
\node(u5) at (4,2){$u_n$};

\node(v0) at (0,-2){$v_0$};
\node(v1) at (-4,-4){$v_1$};
\node(v2) at (-2,-4){$v_2$};
\node(v3) at (0,-4){$v_3$};
\node(v4) at (2,-4){$v_4$};
\node(v5) at (4,-4){$v_m$};

\draw(u0) -- (u1);
\draw(u0) -- (u2);
\draw(u0) -- (u3);
\draw(u0) -- (u4);
\draw(u0) -- (u5);

\draw(v0) -- (v1);
\draw(v0) -- (v2);
\draw(v0) -- (v3);
\draw(v0) -- (v4);
\draw(v0) -- (v5);

\draw(u0) -- (v0);
\end{tikzpicture}
\caption{Bistar graph.}
    \label{figure55}
\end{figure*}
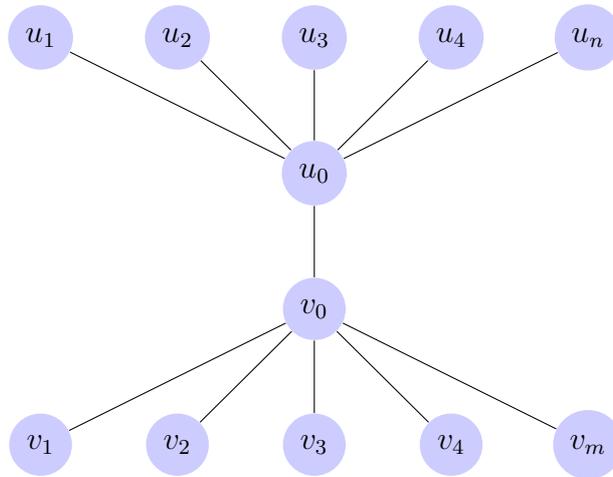
\\
 Define the labeling function $f:V($Bistar graph$)\longrightarrow S$ (positive integers) as follows:
\begin{eqnarray*}
f\left( u_{0}\right) &=& 2n.\\
f\left( u_{i}\right) &=& 2i-1, \ \ i=1,2,\ldots ,n.\\
f\left( v_{j}\right) &=& 2n+j\left( 2n+2\right),\ \  j=1,2,\ldots,m.\\
f\left( v_{0}\right) &=& f\left( u_{\circ }\right) +f\left( v_{m}\right)= 4n+2m\left( n+1\right).\\
\end{eqnarray*}
\end{proof}
\begin{example}
A difference labeling of the Bistar graph is illustrated in  Fig.\ref{figure555}.
\end{example}
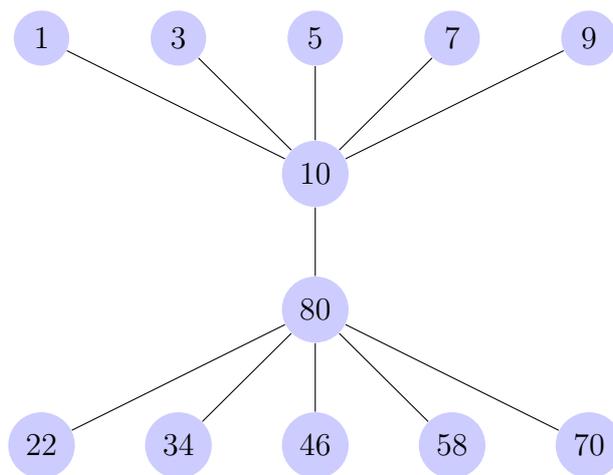
\begin{figure*}[h!]

        \centering

        \begin{tikzpicture}
  [scale=.9,auto=center,every node/.style={circle,fill=blue!20}]
\node(u0) at (0,0){10};
\node(u1) at (-4,2){1};
\node(u2) at (-2,2){3};
\node(u3) at (0,2){5};
\node(u4) at (2,2){7};
\node(u5) at (4,2){9};

\node(v0) at (0,-2){$80$};
\node(v1) at (-4,-4){$22$};
\node(v2) at (-2,-4){$34$};
\node(v3) at (0,-4){$46$};
\node(v4) at (2,-4){$58$};
\node(v5) at (4,-4){$70$};

\draw(u0) -- (u1);
\draw(u0) -- (u2);
\draw(u0) -- (u3);
\draw(u0) -- (u4);
\draw(u0) -- (u5);

\draw(v0) -- (v1);
\draw(v0) -- (v2);
\draw(v0) -- (v3);
\draw(v0) -- (v4);
\draw(v0) -- (v5);
\draw(u0) -- (v0);
\end{tikzpicture}
\caption{Bistar graph labeling.}
\label{figure555}
\end{figure*}

\newpage

\begin{theorem}
The Umbrella graph is a difference graph.
\end{theorem}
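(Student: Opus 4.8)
The plan is to follow the template of the three preceding theorems: fix notation for the umbrella graph via a figure, exhibit an explicit labeling function $f$, and then check that it is a difference labeling. I would describe the umbrella graph as a fan --- a path $v_1v_2\cdots v_n$ together with an apex $w_0$ joined to every $v_i$ --- with a handle path $u_1u_2\cdots u_m$ hung at the apex, so that the only edges incident to the $u_j$'s are $w_0u_1,u_1u_2,\dots,u_{m-1}u_m$.

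First I would label the fan part exactly as the fan appearing in the Butterfly labeling (Figure~\ref{fig51}): set $f(w_0)=6$ and $f(v_i)=6i-3$ for $i=1,\dots,n$. Then the spoke labels form an arithmetic progression of common difference $6=f(w_0)$ lying in the residue class $3\pmod 6$, and one checks directly that $|f(v_i)-f(v_{i+1})|=6\in S$ gives the path edges, $|f(v_i)-f(w_0)|\in\{f(v_1),f(v_{i-1})\}\subseteq S$ gives all the spokes, and $|f(v_i)-f(v_{i+k})|=6k$ for $k\ge2$ is a multiple of $6$ exceeding $6$, hence not in $S$, so no chords appear in the fan.

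Next I would attach the handle in a range of integers lying far above every fan label, so that it meets the fan only at $u_1$. A convenient choice is $f(u_1)=N+6$, $f(u_2)=N$, and $f(u_j)=2^{\,j-2}N$ for $j=3,\dots,m$, with $N$ a sufficiently large integer. Here $|f(u_1)-f(w_0)|=N=f(u_2)\in S$ realizes the edge $w_0u_1$ (a second-type incidence, $f(u_1)=f(w_0)+f(u_2)$), $|f(u_1)-f(u_2)|=6\in S$ gives $u_1u_2$, and $|f(u_j)-f(u_{j+1})|=f(u_j)\in S$ gives the remaining path edges (first-type incidences). For the converse direction one observes that every other pairwise difference involving a handle label is either a multiple of $N$ that is not a power of $2$ times $N$ --- hence not a label --- or lies strictly between two consecutive handle labels, or exceeds all fan labels; in each case it is not in $S$. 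In particular $|f(u_j)-f(w_0)|\notin S$ for $j\ge2$, so $u_1$ is the only handle vertex adjacent to the apex, and no difference $|f(u_j)-f(v_i)|$ lies in $S$.

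The step I expect to be the main obstacle is precisely this last verification: choosing $N$ (its size, and if it helps its residue class modulo $6$) so that the handle labels are numerically disjoint from the fan labels and carry no internal chords, i.e.\ so that the only differences that land in $S$ are the intended ones. With $N$ taken large enough this is routine but somewhat tedious bookkeeping, and an explicit figure in the style of Figures~\ref{fig51} and~\ref{figure555} would make the argument transparent; the degenerate cases $n=1$ and $m=1$ should be noted separately.
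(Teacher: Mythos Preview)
Your approach is essentially the paper's own: label the fan with the apex value as the common difference of an arithmetic progression on the rim, then attach the handle by a single second-type vertex (apex $+$ first handle label) followed by a geometric doubling chain. The only difference is cosmetic---the paper uses apex $2$ with rim labels $2i-1$ rather than your apex $6$ with rim $6i-3$, and it fixes your unspecified $N$ explicitly as $4n$ (so $v_0=4n+2$, $v_1=4n$, $v_j=2^{j-1}\cdot 4n$), which disposes of the ``tedious bookkeeping'' you anticipate.
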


\begin{proof}
Let the Umbrella graph be described as indicated in Figure.\ref{figure65}. We define the labeling function $f:V($Umbrella graph$)\longrightarrow S$ (positive integers) as follows:

\begin{eqnarray*}
f\left( u_{0}\right) &=& 2.\\
f\left( u_{i}\right)  &=& 2i-1,\ \  i=1,2,\ldots ,n.\\
f\left( v_{0}\right) &=& f\left( u_{\circ }\right) +f\left( v_{1}\right)=4n+2 .\\
f\left( v_{j}\right) &=& 2^{j-1} \cdot 4n, \ j=0,1,2,\ldots .
\end{eqnarray*}
\end{proof}
\begin{figure*}[h!]

        \centering

        \begin{tikzpicture}
  [scale=.6,auto=center,every node/.style={circle,fill=blue!20}]

\node(u0) at (0,0){$u_0$};
\node(u1) at (-4,2){$u_1$};
\node(u2) at (-2,2){$u_2$};
\node(u3) at (0,2){$u_3$};
\node(u4) at (2,2){$u_4$};
\node(u5) at (4,2){$u_n$};

\node(v0) at (0,-1.5){$v_0$};
\node(v1) at (0,-3){$v_1$};
\node(v4) at (0,-4.5){$v_2$};
\node(v5) at (0,-6){$v_m$};

\draw(u0) -- (u1);
\draw(u0) -- (u2);
\draw(u0) -- (u3);
\draw(u0) -- (u4);
\draw(u0) -- (u5);

\draw(v0) -- (v1);
\draw(u0) -- (v0);
\draw(v1) -- (v4);
\draw [thick, dotted , -] (v4) -- (v5);

\draw(u1) -- (u2);
\draw(u2) -- (u3);
\draw(u3) -- (u4);
\draw [thick, dotted , -] (u4) -- (u5);
\end{tikzpicture}
\caption{Umbrella graph}
\label{figure65}
\end{figure*}
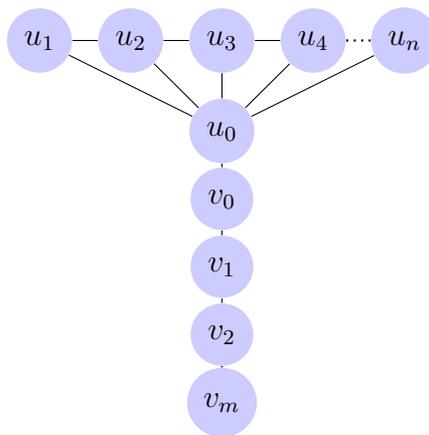
\begin{example}
A difference labeling of the Umbrella garph is illustrated in  Fig.\ref{figure66}
\end{example}

\begin{figure*}[h!]
    \centering
        \begin{tikzpicture}
  [scale=.9,auto=center,every node/.style={circle,fill=blue!20}]

\node(u0) at (0,0){2};
\node(u1) at (-4,2){1};
\node(u2) at (-2,2){3};
\node(u3) at (0,2){5};
\node(u4) at (2,2){7};
\node(u5) at (4,2){9};

\node(v0) at (0,-1){22};
\node(v1) at (0,-2){20};
\node(v4) at (0,-3){40};
\node(v5) at (0,-4){80};

\draw(u0) -- (u1);
\draw(u0) -- (u2);
\draw(u0) -- (u3);
\draw(u0) -- (u4);
\draw(u0) -- (u5);

\draw(v0) -- (v1);
\draw(u0) -- (v0);
\draw(v1) -- (v4);
\draw(v4) -- (v5);

\draw(u1) -- (u2);
\draw(u2) -- (u3);
\draw(u3) -- (u4);
\draw(u4) -- (u5);
\end{tikzpicture}
\caption{Umbrella graph labeling.}
\label{figure66}
\end{figure*}
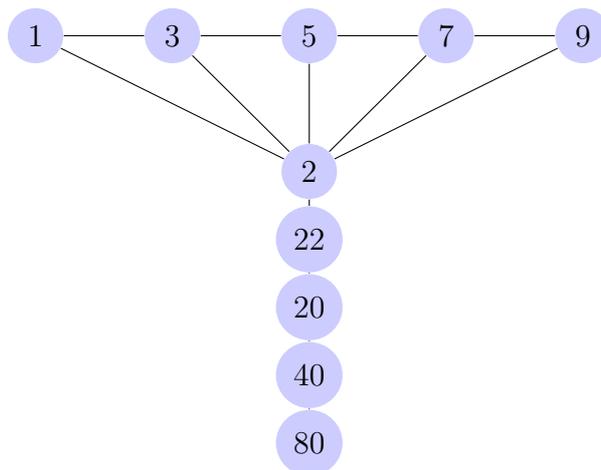

\newpage

\begin{theorem}
Double triangular snake $DT_{n}$ is a difference graph.
\end{theorem}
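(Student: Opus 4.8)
The plan is to mimic the construction-and-verification pattern of the preceding theorems. First, fix a concrete description of $DT_n$: let $w_1, w_2, \ldots, w_n$ be the shared path, and for each $i = 1, \ldots, n-1$ let $p_i$ and $q_i$ be the two apex vertices on the edge $w_iw_{i+1}$, each adjacent to $w_i$ and to $w_{i+1}$ (so $\{w_i,w_{i+1},p_i\}$ and $\{w_i,w_{i+1},q_i\}$ are the two triangles on that edge). Then exhibit an explicit labelling $f : V(DT_n) \to \mathbb{Z}^+$ and take the signature $S = f(V(DT_n))$, so that the claim reduces to checking three things: (a) $f$ is injective; (b) $|f(x)-f(y)| \in S$ for every edge $xy$ of $DT_n$; (c) $|f(x)-f(y)| \notin S$ for every non-adjacent pair $x,y$.

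For the labelling itself, the device suggested by Proposition 2.1 and Corollary 3.1 is to place the path labels on a fast-growing, roughly geometric scale, so that consecutive path labels differ by a value that is itself a label, and then to make each apex a ``second type'' vertex: taking $f(p_i) = f(w_i) + f(w_{i+1})$ forces $|f(p_i)-f(w_i)| = f(w_{i+1}) \in S$ and $|f(p_i)-f(w_{i+1})| = f(w_i) \in S$ for free. The second apex $q_i$ on the same edge must avoid that value, so it receives a different combination of nearby path labels (with a separate value for the boundary edge $w_1w_2$), arranged so that its two triangle-differences are again labels while $q_i$ sits in a different residue class, or magnitude window, from $p_i$ — which keeps $p_i q_i$ a non-edge and, if the scale is chosen so that all labels have distinct ``odd parts'', delivers (a) and the bulk of (b) at once.

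The substantial part is (c). One lists the kinds of non-adjacent pairs — $w_i, w_j$ with $|i-j| \ge 2$; a path vertex and a non-incident apex; $p_i$ and $q_i$; apexes from distinct triangles — and argues in each case that the absolute difference of their labels either overshoots into a gap between two consecutive labels of $S$, or lands in a residue class that contains no label. I expect the main obstacle to be the medium-range coincidences: for a geometric scale of ratio $2$, for instance, $|f(w_i)-f(w_{i+2})|$ collides with an apex label, so the ratio (or the spacing of a super-increasing sequence) and the apex formulas must be tuned jointly so that every forbidden difference is forced strictly outside $S$; a few small values of $n$ and the boundary edges are then handled by direct inspection. Once a closed-form sequence passing all of these constraints is pinned down, conditions (a)--(c) reduce to routine arithmetic comparisons, exactly as in the Butterfly, Bistar and Umbrella proofs.
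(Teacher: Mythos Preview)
Your outline matches the paper's strategy in shape: describe $DT_n$ as a path with two apexes per edge, put the path labels on a geometric scale, and realise one apex as the sum $f(u_i)+f(u_{i+1})$. But the proposal stops exactly where the content is. In the paper the whole proof \emph{is} the explicit labeling formula, and the nontrivial ingredient is the one you leave open --- what to do with the second apex, and which ratio to use for the path --- so as it stands you have a plan rather than a proof.

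The paper's device is to take the path ratio $3/2$, concretely $f(u_i)=3^{\,i-1}\cdot 2^{\,n-(i-1)}$ for $i=1,\ldots,n+1$; the upper apex is the sum $f(v_i)=f(u_i)+f(u_{i+1})=5\cdot 3^{\,i-1}\cdot 2^{\,n-i}$ (your $p_i$), and the lower apex is the \emph{difference} $f(w_i)=f(u_{i+1})-f(u_i)=3^{\,i-1}\cdot 2^{\,n-i}$. The point of the ratio $3/2$ is that this difference equals $f(u_i)/2$, so $w_iu_i$ is a first-type edge while $w_iu_{i+1}$ is a second-type edge via $f(u_{i+1})-f(w_i)=f(u_i)$. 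Thus the two apexes are the sum and the difference of the same endpoint pair, and the path ratio is forced by the requirement that the difference double to an endpoint. This is precisely the ``joint tuning'' you anticipated but did not carry out. (Incidentally, the paper --- as with the Butterfly, Bistar and Umbrella theorems --- merely states the labeling and omits the verification of your condition (c); your plan to check non-adjacencies is more scrupulous than the original, but it only becomes meaningful once an actual formula is on the table.)
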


\begin{proof}
Let the double triangular snake $DT_{n}$ be described as indicated in Fig.\ref{figure10}.
\begin{figure}[h!]
  \centering
  \begin{tikzpicture}
  [scale=.9,auto=center,every node/.style={circle,fill=blue!20}]

\node(u1) at ( -5,0){$u_1$};
\node(u2) at (-3,0){$u_2$};
\node(u4) at (1,0){$u_4$};
\node(u3) at (-1,0){$u_3$};
\node(un) at (3,0){$u_n$};
\node(un+1) at (5,0){$u_{n+1}$};

\node(v1) at (-4,2){$v_1$};
\node(v2) at (-2,2){$v_2$};
\node(v3) at (0,2){$v_3$};
\node(vn) at (4,2){$v_n$};

\node(w1) at (-4,-2){$w_1$};
\node(w2) at (-2,-2){$w_2$};
\node(w3) at (0,-2){$w_3$};
\node(wn) at (4,-2){$w_n$};

\draw(u1) -- (u2);
\draw(u2) -- (u3);
\draw(u3) -- (u4);
\draw  [thick, dotted , -] (u4) -- (un);
\draw(un) -- (un+1);

\draw(u1) -- (v1);
\draw(u2) -- (v1);
\draw(u2) -- (v2);
\draw(u3) -- (v2);
\draw(u3) -- (v3);
\draw(u4) -- (v3);
\draw(un) -- (vn);
\draw(un+1) -- (vn);

\draw(u1) -- (w1);
\draw(u2) -- (w1);
\draw(u2) -- (w2);
\draw(u3) -- (w2);
\draw(u3) -- (w3);
\draw(u4) -- (w3);
\draw(un) -- (wn);
\draw(un+1) -- (wn);

\end{tikzpicture}
  \caption{The double triangular snake $DT_{n}$.}\label{figure10}
\end{figure}
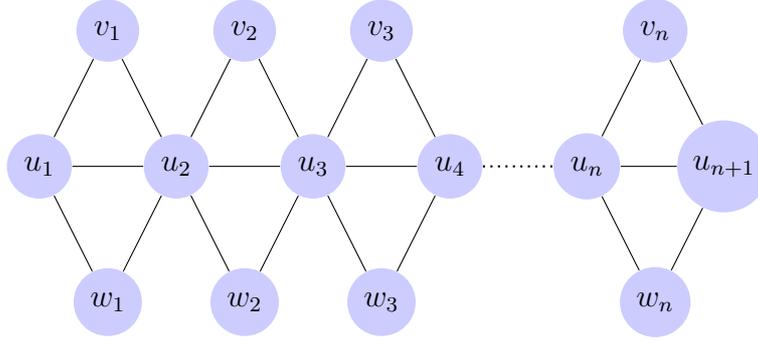\\
Define the labeling function $f:V(DT_{n}\ snake))\longrightarrow S$ (positive integers) as follows
\begin{eqnarray*}
f(u_{i}) &=& 3^{i-1}\cdot2^{n-(i-1)} \ ,i=1,2, \dots , n+1.\\
f(v_{i})&=& 5\cdot 3^{i-1}\cdot2^{n-i} \ ,i=1,2, \dots , n.\\
f(w_{i}) &=& 3^{i-1}\cdot2^{n-i}\ ,i=1,2, \dots , n.
\end{eqnarray*}
\end{proof}

\begin{example}
A difference labeling of the double triangular snake is illustrated in Fig.\ref{figure11}
\end{example}

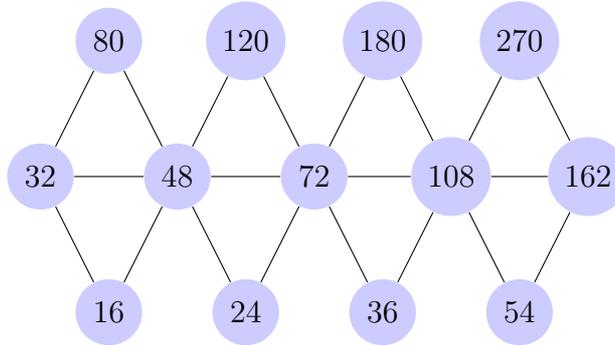
\begin{figure}[h!]

  \centering

  \begin{tikzpicture}
  [scale=.9,auto=center,every node/.style={circle,fill=blue!20}]

\node(u1) at ( -5,0){32};
\node(u2) at (-3,0){48};
\node(u4) at (1,0){108};
\node(u3) at (-1,0){72};
\node(un) at (3,0){162};

\node(v1) at (-4,2){80};
\node(v2) at (-2,2){120};
\node(v3) at (0,2){180};
\node(vn) at (2,2){270};

\node(w1) at (-4,-2){16};
\node(w2) at (-2,-2){24};
\node(w3) at (0,-2){36};
\node(wn) at (2,-2){54};

\draw(u1) -- (u2);
\draw(u2) -- (u3);
\draw(u3) -- (u4);
\draw(u4) -- (un);

\draw(u1) -- (v1);
\draw(u2) -- (v1);
\draw(u2) -- (v2);
\draw(u3) -- (v2);
\draw(u3) -- (v3);
\draw(u4) -- (v3);
\draw(un) -- (vn);
\draw(u4) -- (vn);

\draw(u1) -- (w1);
\draw(u2) -- (w1);
\draw(u2) -- (w2);
\draw(u3) -- (w2);
\draw(u3) -- (w3);
\draw(u4) -- (w3);
\draw(un) -- (wn);
\draw(u4) -- (wn);

\end{tikzpicture}
  \caption{The double triangular snake $DT_{5}$.}\label{figure11}
\end{figure}

\begin{theorem}
The irregular triangular snake $\left( IT_{n}\right) $ is a difference graph.
\end{theorem}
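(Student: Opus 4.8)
Recall that the irregular triangular snake $IT_{n}$ is formed from a path $u_{1}u_{2}\cdots u_{n}$ by adjoining, for each admissible index $i$, a new vertex $v_{i}$ joined to $u_{i}$ and to $u_{i+2}$. Thus the edges a labeling must realise are exactly the path edges $u_{i}u_{i+1}$ together with the pairs $v_{i}u_{i}$ and $v_{i}u_{i+2}$, and no others. My plan is to follow the template used above for $DT_{n}$, the umbrella graph and the bistar: exhibit one explicit injection $f\colon V(IT_{n})\to\mathbb{Z}^{+}$ given by closed formulas, put $S=f(V(IT_{n}))$, and then check that $xy\in E(IT_{n})$ if and only if $|f(x)-f(y)|\in S$.

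For the construction I would again engineer all the required adjacencies to be of the second type in the sense of Proposition~2.1 and Corollary~3.1. The natural first attempt is to place the path labels in a multiplicative progression $f(u_{i})=c\,r^{\,i}$ and to set $f(v_{i})=f(u_{i})+f(u_{i+2})$, so that $|f(v_{i})-f(u_{i})|=f(u_{i+2})\in S$ and $|f(v_{i})-f(u_{i+2})|=f(u_{i})\in S$ automatically give the two edges at $v_{i}$; the path edges $u_{i}u_{i+1}$ then force $r-1$ to be ``realisable'' inside $S$, while the non-edges $u_{i}u_{i+2}$ force $r^{2}-1$ to be avoided. This crude version does not work as it stands: with ratio $r=2$ the labels $f(v_{i})$ themselves form a progression of ratio $2$ and consecutive vertices $v_{i},v_{i+1}$ become spuriously adjacent, while with $r\ge 3$ the difference $f(u_{i+1})-f(u_{i})$ need not lie in $S$ at all. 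Hence the actual labeling must be a little more elaborate: either a geometric scheme in three (rather than two) small primes, with a prime reserved for the $v_{i}$ so that the $u$-family and the $v$-family are pairwise ``multiplicatively independent'', or a labeling in which the $u$-path is itself built so that each consecutive difference coincides with an earlier label of the graph. Writing down one such explicit $f$ (and the accompanying figure) is the first substantive step.

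Once $f$ is fixed the verification splits into three parts. Injectivity and the strict monotonicity of $f$ along the path are immediate from the formula, and the prescribed edges hold by construction as indicated above. The only real content — and the step I expect to be the main obstacle — is the exclusion of all unwanted adjacencies: one must check that $|f(u_{i})-f(u_{j})|\notin S$ whenever $|i-j|\ge 2$, that $|f(v_{i})-f(u_{i+1})|\notin S$ so that $v_{i}\not\sim u_{i+1}$, and that $|f(v_{i})-f(v_{j})|\notin S$ for $i\ne j$ together with $|f(v_{i})-f(u_{j})|\notin S$ for $j\notin\{i,i+2\}$. Each of these reduces to comparing the prime factorisation of the candidate difference against the (two or three) explicit families comprising $S$, so it is a finite and mechanical case analysis; the delicate point is to have chosen the exponents and coefficients in the construction so that every one of these cases genuinely closes. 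When the checklist is complete, $f$ is a difference labeling of $IT_{n}$, which proves the theorem.
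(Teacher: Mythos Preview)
Your description of $IT_{n}$ does not match the graph in the paper. There the irregular triangular snake carries \emph{three} families of vertices: the path $u_{1},\dots,u_{n}$; vertices $v_{1},v_{3},v_{5},\dots$ attached only at the \emph{odd} positions, with $v_{j}$ adjacent to $u_{j}$ and $u_{j+2}$; and a further layer $w_{1},w_{3},w_{5},\dots$ with $w_{k}$ adjacent to $v_{k}$ and $v_{k+2}$ (the terminal $w$ being adjacent to $v_{n-3}$ and $u_{n}$). Your opening paragraph omits the $w$-layer entirely and places a $v_{i}$ at every index $i$, so the whole proposal is aimed at a different graph.

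Once the correct structure is in hand, your ``natural first attempt'' with ratio $r=2$ is precisely the paper's labeling and it works without the extra machinery you were preparing: $f(u_{i})=2^{i}$, $f(v_{j})=f(u_{j})+f(u_{j+2})=5\cdot 2^{j}$, and $f(w_{k})=f(v_{k})+f(v_{k+2})=5^{2}\cdot 2^{k}$ (with one ad~hoc value for the final $w$). The objection you raise---that consecutive $v_{i},v_{i+1}$ would become spuriously adjacent when $r=2$---evaporates because there are no consecutive $v$'s: the $v$-labels form a geometric progression of ratio $4$, and $|5\cdot 2^{j+2}-5\cdot 2^{j}|=15\cdot 2^{j}$ lies in none of the families $\{2^{i}\}$, $\{5\cdot 2^{j}\}$, $\{25\cdot 2^{k}\}$. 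So the three-prime fallback scheme you sketch is unnecessary; the obstacle you anticipated is an artefact of having the wrong graph, not of the labeling idea itself.
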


\begin{proof}
Assume that the irregular triangular snake $\left( IT_{n}\right) $ is described as indicated in Fig.\ref{figure12}.
Define the labeling function $f:V(A(C_{n}\ snake))\longrightarrow S$ (positive integers) as follows:

\begin{eqnarray*}
f(u_{i}) &=& 2^{i}  \ ,i=1,2, \ldots , n.\\
f(v_{j}) &=& f(u_{j})+f(u_{j+2})=5 \cdot 2^{j}  \ ,i=1,3,5, \ldots , n-3.\\
f(w_{k}) &=& f(v_{k})+f(v_{k+2})=5^{2} \cdot 2^{k},  \ ,i=1,2, \ldots , n-3.\\
f(w_{n-3}) &=& f(v_{n-3})+f(u_{n}).
\end{eqnarray*}
\end{proof}

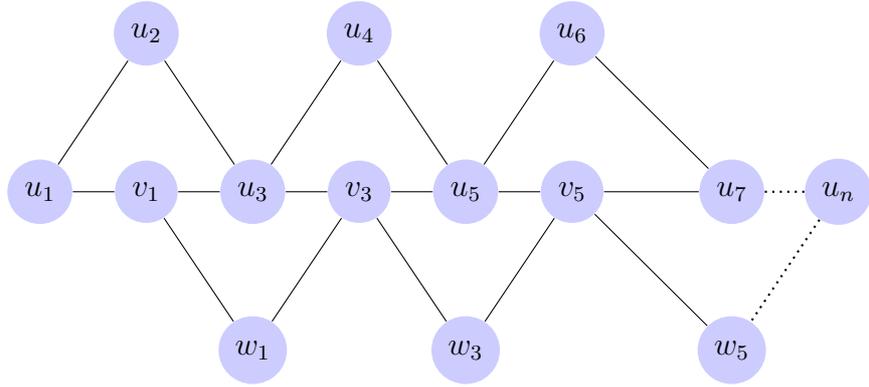
\begin{figure}[h!]

  \centering

  \begin{tikzpicture}
  [scale=.7,auto=center,every node/.style={circle,fill=blue!20}]

\node(u1) at ( -8,0){$u_1$};
\node(u2) at (-6,3){$u_2$};
\node(u4) at (-2,3){$u_4$};
\node(u3) at (-4,0){$u_3$};
\node(u5) at (0,0){$u_5$};
\node(u6) at (2,3){$u_6$};
\node(u7) at (5,0){$u_7$};
\node(un) at (7,0){$u_{n}$};

\node(v1) at (-6,0){$v_1$};
\node(v3) at (-2,0){$v_3$};
\node(v5) at (2,0){$v_5$};

\node(w1) at (-4,-3){$w_1$};
\node(w3) at (0,-3){$w_3$};
\node(w5) at (5,-3){$w_5$};

\draw(u1) -- (u2);
\draw(u2) -- (u3);
\draw(u3) -- (u4);
\draw(u4) -- (u5);
\draw(u5) -- (u6);
\draw  (u6) -- (u7);
\draw [thick, dotted , -] (u7) -- (un);

\draw(u1) -- (v1);
\draw(v1) -- (u3);
\draw(u3) -- (v3);
\draw(v3) -- (u5);
\draw(u5) -- (v5);
\draw (v5) -- (u7);

\draw(v1) -- (w1);
\draw(w1) -- (v3);
\draw(v3) -- (w3);
\draw(w3) -- (v5);
\draw(v5) -- (w5);
\draw [thick, dotted , -] (un) -- (w5);

\end{tikzpicture}
  \caption{The irregular triangular snake.}\label{figure12}
\end{figure}

\begin{example}
A difference labeling of the irregular triangular snake labeling is illustrated in Fig.\ref{figure13}
\end{example}

\begin{figure}[h!]

  \centering

  \begin{tikzpicture}
  [scale=.7,auto=center,every node/.style={circle,fill=blue!20}]

\node(u1) at ( -8,0){2};
\node(u2) at (-6,3){4};
\node(u3) at (-4,0){8};
\node(u4) at (-2,3){16};
\node(u5) at (0,0){32};
\node(u6) at (2,3){64};
\node(un) at (5,0){128};
\node(un+1) at (7,0){256};

\node(v1) at (-6,0){10};
\node(v3) at (-2,0){40};
\node(v5) at (2,0){160};

\node(w1) at (-4,-3){50};
\node(w3) at (0,-3){200};
\node(w5) at (5,-3){416};

\draw(u1) -- (u2);
\draw(u2) -- (u3);
\draw(u3) -- (u4);
\draw(u4) -- (u5);
\draw(u5) -- (u6);
\draw(u6) -- (un);
\draw(un) -- (un+1);

\draw(u1) -- (v1);
\draw(v1) -- (u3);
\draw(u3) -- (v3);
\draw(v3) -- (u5);
\draw(u5) -- (v5);
\draw(v5) -- (un);
\draw(un) -- (un+1);

\draw(v1) -- (w1);
\draw(w1) -- (v3);
\draw(v3) -- (w3);
\draw(w3) -- (v5);
\draw(v5) -- (w5);
\draw(un+1) -- (w5);

\end{tikzpicture}
  \caption{The irregular triangular snake $IT_{8}$ labeling}\label{figure13}
\end{figure}
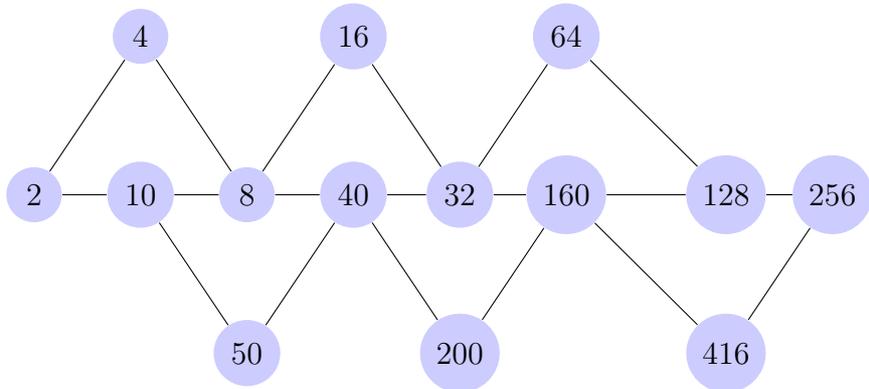

\begin{theorem}
The $C_{n}$ snake is difference graph.
\end{theorem}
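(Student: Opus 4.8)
The plan is to produce an explicit difference labeling, in the style of the preceding theorems, by gluing together rescaled copies of one fixed labeling of the cycle $C_n$. First I would fix that cycle labeling: order the vertices of $C_n$ cyclically as $v_1,\dots ,v_n$ and put $f(v_j)=2^{\,j-1}$ for $1\le j\le n-1$ and $f(v_n)=2^{\,n-2}+1$. Writing $\beta:=2^{\,n-2}+1$, one checks quickly that the differences of this labeling lying in the label set $\{1,2,\dots ,2^{\,n-2},\beta\}$ are exactly the $2^{\,j-1}$ from the consecutive pairs $v_jv_{j+1}$, together with $1=|f(v_n)-f(v_{n-1})|$ and $2^{\,n-2}=|f(v_n)-f(v_1)|$; this uses only that $2^{q}-2^{p}$ is a power of $2$ exactly when $q=p+1$, that $\beta$ is odd and exceeds every power of $2$ in the set, and a parity argument for the numbers $\beta-2^{p}$. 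Hence this labeling realizes precisely the cycle $v_1v_2\cdots v_{n-1}v_nv_1$, and I would use the edge $v_nv_1$ as the spine edge that is ``replaced'' by each copy.

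Next I would set up the snake. Let $p_1,\dots ,p_{k+1}$ be the spine vertices (so $k=n-1$ for the snake built from $P_n$), and for the $j$-th copy of $C_n$ let $x_{j,1},\dots ,x_{j,n-2}$ be its new vertices, arranged so that $p_j-x_{j,1}-\cdots -x_{j,n-2}-p_{j+1}$ is a path and $p_jp_{j+1}$ an edge. Label the $j$-th copy by the cycle labeling above scaled by $\beta^{\,j-1}$, identifying $p_j\leftrightarrow v_1$, $x_{j,\ell }\leftrightarrow v_{\ell +1}$, $p_{j+1}\leftrightarrow v_n$; explicitly,
\[
f(p_j)=\beta^{\,j-1}\quad (1\le j\le k+1),\qquad f(x_{j,\ell })=2^{\ell }\beta^{\,j-1}\quad (1\le \ell \le n-2).
\]
The two prescriptions agree on each shared vertex, since the label $\beta^{(j+1)-1}$ of $p_{j+1}$ read from the $(j+1)$-st copy equals $\beta\cdot \beta^{\,j-1}$, the scaled label of $v_n$ in the $j$-th copy. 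Injectivity is immediate: each label is $2^{a}\beta^{b}$ with $\beta$ odd, so the $2$-adic valuation recovers $a$ and then $b$, and the pairs $(a,b)$ occurring are pairwise distinct. Also $|f(p_j)-f(p_{j+1})|=2^{\,n-2}\beta^{\,j-1}=f(x_{j,n-2})$ is in the signature, so the spine edges come out right.

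It remains to check the defining equivalence: $|f(u)-f(w)|$ lies in the signature $S$ if and only if $uw$ is an edge. The structural fact to exploit is that the odd elements of $S$ are exactly the powers of $\beta$, while its even elements have $2$-adic valuation between $1$ and $n-2$; hence an integer lies in $S$ only if, after removing the largest power of $2$ dividing it, the odd part left is a power of $\beta$ and the valuation removed is $0$ or between $1$ and $n-2$. For $u,w$ in the same copy the verification is the single-cycle computation rescaled by $\beta^{\,j-1}$, once one also notes that the non-edge differences of that copy still fail the criterion against the \emph{full} $S$. For $u,w$ in different copies one factors out the common power of $\beta$ and of $2$ and then uses size (labels grow geometrically in $\beta$) and parity to see that the surviving odd part lies strictly between two consecutive powers of $\beta$, hence is not a power of $\beta$. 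This cross-copy bookkeeping, together with the check that a shared cut vertex gains no neighbour outside its two copies, is the main obstacle: it is elementary, but splits into several cases according to the $2$-adic valuations of the two labels and the gap between their copy indices, using repeatedly that $\beta-1=2^{\,n-2}$. As elsewhere in the paper, the accompanying example corroborates the labeling. (Alternatively, and non-constructively: a $C_n$ snake is a chain of copies of $C_n$ obtained by identifying one vertex of each copy with a different vertex of the next, and $C_n$ is a difference graph, so the result also follows from the chain construction of Sugeng and Ryan cited in the introduction; but an explicit labeling better matches the format of the other proofs.)
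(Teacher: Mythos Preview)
Your construction is correct and is essentially the paper's own labeling: writing $\beta=1+2^{\,n-2}$, the paper's formula $f(u_i)=2^{\,\mathrm{rem}(i,n-1)}\beta^{\lfloor i/(n-1)\rfloor}$ gives exactly your values $f(p_j)=\beta^{\,j-1}$ and $f(x_{j,\ell})=2^{\ell}\beta^{\,j-1}$ under the obvious reindexing. The only difference is presentational: the paper simply states the formula and the example, whereas you supply the injectivity and the in-copy/cross-copy edge verification that the paper omits; your extra checks are sound and in fact make the argument complete where the paper leaves it implicit.
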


\begin{proof}
Let the $C_{n}$ snake be described as indicated in Fig.\ref{figure16}

\begin{figure}[h!]
  \centering
\begin{tikzpicture}
  [scale=.9,auto=center,every node/.style={circle,fill=blue!20}]
  \node (u1) at (-15,0) {$u_0$};
 \node (u2) at (-14.5,1)  {$u_1$};
  \node (u3) at (-14,2)  {$u_2$};
  \node (u4) at (-13,2) {$u_3$};
 \node (u5) at (-12.5,1)  {$u_4$};
  \node (u6) at (-12,0)  {$u_n$};

\node (u7) at (-11.5,1)  {};
  \node (u8) at (-11,2)  {};
  \node (u9) at (-10,2) {};
 \node (u10) at (-9.5,1)  {};
  \node (u11) at (-9,0)  {};

\node (u12) at (-8.5,1)  {};
 \node (u13) at (-8,2)  {};
 \node (u14) at (-7,2) {};
 \node (u15) at (-6.5,1)  {};
 \node (u16) at (-6,0)  {};

\node (u17) at (-5.5,1)  {};
  \node (u18) at (-5,2)  {};
  \node (u19) at (-4,2) {};
 \node (u20) at (-3.5,1)  {};
  \node (u21) at (-3,0)  {};

  \draw (u1) -- (u2);
  \draw (u2) -- (u3);
  \draw (u3) -- (u4);
 \draw (u4) -- (u5);
  \draw [thick, dotted , -](u5) -- (u6);
  \draw (u1) -- (u6);

  \draw (u7) -- (u8);
  \draw (u8) -- (u9);
  \draw (u9) -- (u10);
 \draw (u10) -- (u11);
  \draw (u7) -- (u6);
  \draw (u11) -- (u6);

  \draw (u11) -- (u12);
  \draw (u12) -- (u13);
  \draw (u13) -- (u14);
 \draw (u14) -- (u15);
  \draw (u15) -- (u16);
  \draw (u11) -- (u16);

  \draw (u16) -- (u17);
  \draw (u17) -- (u18);
  \draw (u18) -- (u19);
  \draw (u19) -- (u20);
 \draw (u20) -- (u21);
  \draw (u16) -- (u21);
\end{tikzpicture}
  \caption{$C_n$ snake}\label{figure16}
\end{figure}
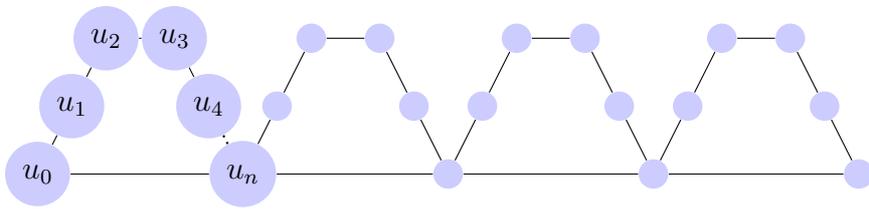
Define the labeling function $f:V(C_{n}\ snake)\longrightarrow S$ (positive integers) as follows:\\
$$f(u_{i})=2^{j}\ (1+2^{n-2})^{\left\lfloor \frac{i}{n-1}\right\rfloor },\qquad  i=0,1,2,\ldots,$$
where $j=rem(i,n-1)$ is the remainder of $i$ when it divided by $n-1$.
\end{proof}

\begin{example}
A difference labeling of the $C_{6}$ snake is illustrated in Fig.\ref{figure17}.
\end{example}

\begin{figure}[h!]
\centering
\begin{tikzpicture}
 [acteur/.style={circle, fill=blue!40,thick, inner sep=2pt, minimum size=0.3cm}]

  \node (u1) at (-15,0) [acteur][label=$2^0$]{};
 \node (u2) at (-14,1) [acteur][label=$2$]{};
  \node (u3) at (-13,2) [acteur][label=$2^2$]{};
  \node (u4) at (-12,2) [acteur][label=$2^3$]{};
 \node (u5) at (-11,1)  [acteur][label=$2^4$]{};
  \node (u6) at (-10,0) [acteur][label=$17$]{};

\node (u7) at (-9,1) [acteur][label=$2 \cdot 17$]{};
  \node (u8) at (-8,2) [acteur][label=$2^2 \cdot 17$]{};
  \node (u9) at (-7,2) [acteur][label=$2^3 \cdot 17$]{};
 \node (u10) at (-6,1) [acteur][label=$2^4 \cdot 17$]{};
  \node (u11) at (-5,0)  [acteur][label=$17^2$]{};

\node (u12) at (-4,1)  [acteur][label=2 $17^2$]{};
 \node (u13) at (-3,2)  [acteur][label=$2^2 \cdot 17^2$]{};
 \node (u14) at (-1.5,2)  [acteur][label=$2^3 \cdot 17^2$]{};
 \node (u15) at (-.5,1)   [acteur][label=$2^4 \cdot 17^2$]{};
 \node (u16) at (.5,0)   [acteur][label=$ 17^3$]{};

  \draw (u1) -- (u2);
  \draw (u2) -- (u3);
  \draw (u3) -- (u4);
 \draw (u4) -- (u5);
  \draw (u5) -- (u6);
  \draw (u1) -- (u6);

  \draw (u7) -- (u8);
  \draw (u8) -- (u9);
  \draw (u9) -- (u10);
 \draw (u10) -- (u11);
  \draw (u7) -- (u6);
  \draw (u11) -- (u6);

  \draw (u11) -- (u12);
  \draw (u12) -- (u13);
  \draw (u13) -- (u14);
 \draw (u14) -- (u15);
  \draw (u15) -- (u16);
  \draw (u11) -- (u16);
\end{tikzpicture}\caption{$C_6$ snake labeling}\label{figure17}
\end{figure}
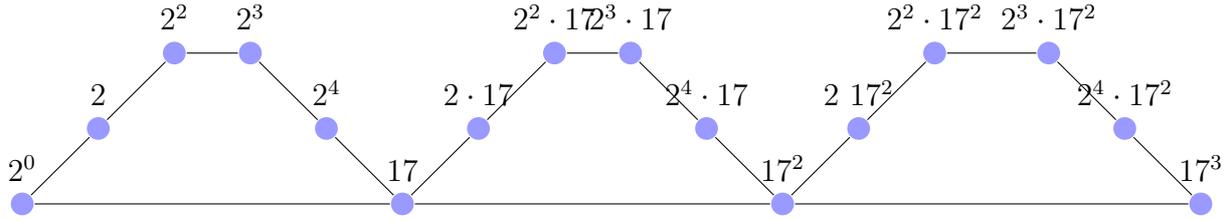

\begin{theorem}
The alternate $C_{n}$ snake $A(C_{n}\ snake)$ is a difference graph.
\end{theorem}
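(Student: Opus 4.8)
The plan is to follow the pattern of the preceding snake results and write down an explicit difference labeling. First I would fix the combinatorial picture: if the alternate $C_n$ snake has $k$ inflated edges, it is the union of $k$ copies of $C_n$, say $\mathcal C_1,\dots,\mathcal C_k$, successively joined by bridge edges. In $\mathcal C_t$ I single out the two adjacent vertices $a_t,b_t$ coming from the replaced path edge, write $z^{(t)}_1,\dots,z^{(t)}_{n-2}$ for the new vertices in the cyclic order $a_t,z^{(t)}_1,\dots,z^{(t)}_{n-2},b_t,a_t$, and note that for $1\le t\le k-1$ there is a bridge $b_ta_{t+1}$ (the minor end-effects that appear when the base path begins or ends with a non-inflated edge are treated the same way and I would suppress them).

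For the labeling I would put $\rho=2(1+2^{n-2})$, take scales $c_t=\rho^{\,t-1}$, and set
\[
f(a_t)=c_t,\qquad f\bigl(z^{(t)}_i\bigr)=2^{\,i}c_t\ \ (1\le i\le n-2),\qquad f(b_t)=(1+2^{n-2})c_t .
\]
Restricted to $\mathcal C_t$ this is exactly the scaled standard difference labeling of $C_n$, so all cycle edges are realized; the reason for choosing $b_t$ (rather than an interior vertex) to carry the ``cap'' value $(1+2^{n-2})c_t$ is that the bridge difference then becomes
\[
|f(a_{t+1})-f(b_t)|=\bigl|\rho c_t-(1+2^{n-2})c_t\bigr|=(1+2^{n-2})c_t=f(b_t)\in S,
\]
so every bridge is realized as well. (For $n=3$ this already recovers a difference labeling of the alternate triangular snake, e.g.\ $S=\{1,2,3,6,12,18,\dots\}$.)

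It remains to check that no unwanted adjacency is created, and this is the only non-routine part. The labels of $\mathcal C_t$ all lie in $I_t=[\,c_t,(1+2^{n-2})c_t\,]$, and since $(1+2^{n-2})c_t<\rho c_t=c_{t+1}$ these intervals are pairwise disjoint, separated by genuine gaps $G_t=(\,(1+2^{n-2})c_t,\ \rho c_t\,)$ that contain no label. Inside a single $\mathcal C_t$, non-edges are ruled out by the classical powers-of-two argument (a difference $2^a-2^b$ of non-consecutive powers, or $(1+2^{n-2})-2^a$, is never again in the local signature, and its size keeps it inside $I_t$). For two vertices in different cycles one writes the difference as $(\mu\rho^{\,s}-\nu)c_t$ with $\mu,\nu$ in the coefficient set $\Sigma=\{1,2,\dots,2^{n-2},1+2^{n-2}\}$ and $s\ge 1$, and a short arithmetic check shows that such a number can be a label only when $s=1$ and $(\mu,\nu)=(1,\,1+2^{n-2})$ — that is, precisely for a bridge edge; in every other case it lands strictly inside some gap $G_r$. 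This last elimination is the heart of the matter, and the whole construction (in particular the value $\rho=2(1+2^{n-2})$ and the placement of the cap at the bridge vertex) is dictated by it, in the same spirit as the factor-$\tfrac32$ growth appearing in the $DT_n$ labeling above. If one prefers to avoid the case analysis altogether, a self-contained alternative is to observe that the alternate $C_n$ snake is a chain of copies of ``$C_n$ with one pendant edge'' glued vertex-to-vertex, so that once that small graph is exhibited as a difference graph the statement follows from the theorem of Sugeng and Ryan cited in the Introduction.
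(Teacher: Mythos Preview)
Your construction is essentially the one in the paper. The paper writes the labeling in closed form as
\[
f(u_i)=2^{\,j+\lfloor i/n\rfloor}\bigl(1+2^{\,n-2}\bigr)^{\lfloor i/n\rfloor},\qquad j=i\bmod n,
\]
which, once you group the vertices cycle by cycle, gives each cycle the label set $2\rho^{\,t-1}\cdot\{1,2,4,\dots,2^{\,n-2},\,1+2^{\,n-2}\}$ with $\rho=2(1+2^{\,n-2})$; this is exactly your set $c_t\cdot\Sigma$ scaled by a global factor~$2$, the extra factor being used to accommodate the initial pendant vertex $u_0$ with label~$1$ (your ``end-effect''). So the scaling factor, the placement of the cap value $1+2^{\,n-2}$ at the bridge vertex, and the resulting bridge identity $f(a_{t+1})-f(b_t)=f(b_t)$ all coincide with the paper's scheme. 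Where you go beyond the paper is in actually sketching the ``no unwanted adjacency'' verification via the gap intervals $G_t$ and the case split on $(\mu,\nu,s)$; the paper simply states the formula and gives a worked example without any such check, so your write-up is in fact more complete than the original on this point.
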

\begin{proof}
Let the alternate $C_{n}$ snake be described as indicated in Fig.\ref{figure18}.
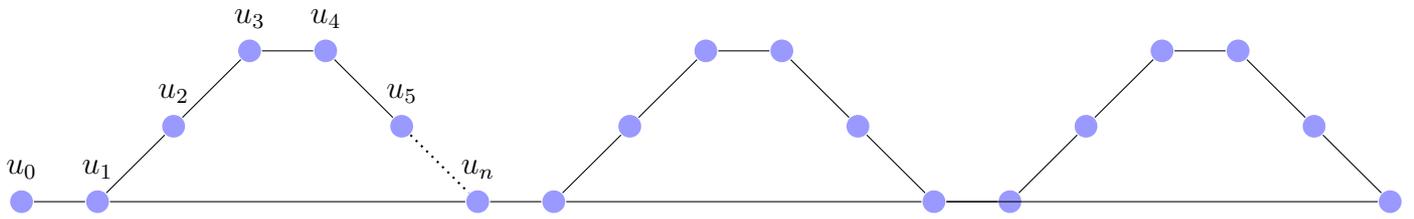
\begin{figure}[h!]
  \centering
  \begin{tikzpicture}
 [acteur/.style={circle, fill=blue!40,thick, inner sep=2pt, minimum size=0.3cm}]

\node (u0) at (-16,0) [acteur][label=$u_0$]{};
  \node (u1) at (-15,0) [acteur][label=$u_1$]{};
 \node (u2) at (-14,1) [acteur][label=$u_2$]{};
  \node (u3) at (-13,2) [acteur][label=$u_3$]{};
  \node (u4) at (-12,2) [acteur][label=$u_4$]{};
 \node (u5) at (-11,1)  [acteur][label=$u_5$]{};
  \node (u6) at (-10,0) [acteur][label=$u_n$]{};

\node (u7) at (-9,0) [acteur][label=$$]{};
  \node (u8) at (-8,1) [acteur][label=$$ ]{};
  \node (u9) at (-7,2) [acteur][label=$$ ]{};
 \node (u10) at (-6,2) [acteur][label=$$ ]{};
  \node (u11) at (-5,1)  [acteur][label=$$]{};
\node (u12) at (-4,0)  [acteur][label= $$]{};

 \node (u13) at (-3,0)  [acteur][label=$$]{};
 \node (u14) at (-2,1)  [acteur][label=$$]{};
 \node (u15) at (-1,2)   [acteur][label=$$]{};
 \node (u16) at (0,2)   [acteur][label=$$]{};
 \node (u17) at (1,1)  [acteur][label=$$]{};
 \node (u18) at (2,0)  [acteur][label=$$]{};

 \draw (u0) -- (u1);
  \draw (u1) -- (u2);
  \draw (u2) -- (u3);
  \draw (u3) -- (u4);
 \draw (u4) -- (u5);
 \draw [thick, dotted , -] (u5) -- (u6);
\draw (u1) -- (u6);

  \draw (u7) -- (u12);
\draw (u6) -- (u7);
  \draw (u7) -- (u8);
  \draw (u8) -- (u9);
  \draw (u9) -- (u10);
 \draw (u10) -- (u11);
  \draw (u11) -- (u12);

  \draw (u12) -- (u18);
  \draw (u12) -- (u13);
  \draw (u13) -- (u14);
 \draw (u14) -- (u15);
  \draw (u15) -- (u16);
  \draw (u17) -- (u16);
  \draw (u17) -- (u18);

\end{tikzpicture}
  \caption{Alternate $C_n$ snake.}\label{figure18}
\end{figure}

Define the labeling function $f:V(A(C_{n}\ snake))\longrightarrow S$ (positive integers) as follows:
$$f(u_{i})=2^{j+\left\lfloor \frac{i}{n}\right\rfloor }\
(1+2^{n-2})^{\left\lfloor \frac{i}{n}\right\rfloor }, \quad i=0,1,2, \ldots$$
where $j=rem(i,n)$ is the remainder of $i$ when it divided by $n$.
\end{proof}
\begin{example}
A difference labeling of the alternate $C_{5}$ snake $A(C_{5}\ snake)$ is illustrated in Fig.\ref{figure99}
\end{example}
\begin{figure}[h!]
    \centering
\begin{tikzpicture}
 [acteur/.style={circle, fill=blue!40,thick, inner sep=2pt, minimum size=0.3cm}]
    \node (u20) at (-16,0) [acteur][label=$2^0$]{};
  \node (u0) at (-15,0) [acteur][label=$2^1$]{};
 \node (u1) at (-14.5,1) [acteur][label=$2^2$]{};
  \node (u2) at (-13.5,2) [acteur][label=$2^3$]{};
  \node (u3) at (-12.5,1) [acteur][label=$2^4$]{};
 \node (u4) at (-12,0)  [acteur][label=$2^1 \cdot 9$]{};

 \node (u5) at (-11,0) [acteur][label=$2^2 \cdot 9$]{};
\node (u6) at (-10.5,1) [acteur][label=$2^3 \cdot 9$]{};
  \node (u7) at (-9.5,2) [acteur][label=$2^4 \cdot 9$ ]{};
  \node (u8) at (-8.5,1) [acteur][label=$2^5 \cdot 9$ ]{};
 \node (u9) at (-8,0) [acteur][label=$2^2 \cdot 9^2$ ]{};

\node (u10) at (-7,0)  [acteur][label=$2^3 \cdot 9^2$]{};
\node (u11) at (-6.5,1)  [acteur][label= $2^4 \cdot 9^2$]{};
 \node (u12) at (-5.5,2)  [acteur][label=$2^5\cdot9^2$]{};
 \node (u13) at (-4.5,1)  [acteur][label=$2^6 \cdot 9^2$]{};
 \node (u14) at (-4,0)   [acteur][label=$2^3 \cdot 9^3$]{};

\node (u15) at (-3,0)  [acteur][label=$2^4 \cdot 9^3$]{};
\node (u16) at (-2.5,1)  [acteur][label= $2^5 \cdot 9^3$]{};
 \node (u17) at (-1.5,2)  [acteur][label=$2^6 \cdot 9^3$]{};
 \node (u18) at (-.5,1)  [acteur][label=$2^7 \cdot 9^3$]{};
 \node (u19) at (0,0)   [acteur][label=$2^4 \cdot 9^4$]{};

 \draw (u20) -- (u0);
 \draw (u0) -- (u1);
  \draw (u1) -- (u2);
  \draw (u2) -- (u3);
  \draw (u3) -- (u4);
\draw (u0) -- (u4);

 \draw (u4) -- (u5);
  \draw (u5) -- (u6);
\draw (u6) -- (u7);
  \draw (u7) -- (u8);
  \draw (u8) -- (u9);
  \draw (u9) -- (u5);

 \draw (u9) -- (u10);
 \draw (u10) -- (u11);
  \draw (u11) -- (u12);
  \draw (u12) -- (u13);
  \draw (u13) -- (u14);
 \draw (u10) -- (u14);

 \draw (u14) -- (u15);
  \draw (u15) -- (u16);
  \draw (u16) -- (u17);
  \draw (u17) -- (u18);
 \draw (u18) -- (u19);
 \draw (u15) -- (u19);
\end{tikzpicture}
\caption{$A(C_{5}$ snake Labeling.}
\label{figure99}
\end{figure}
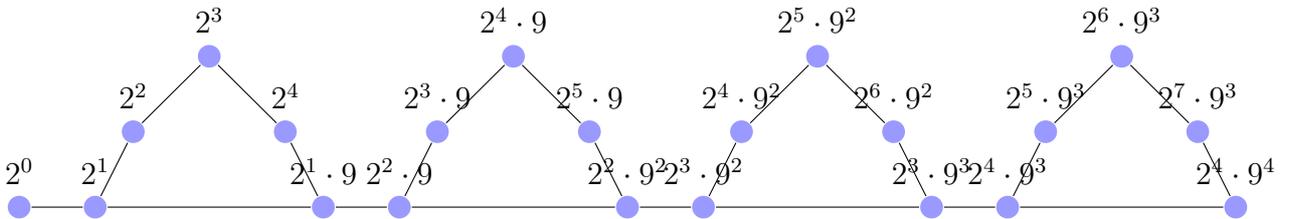

\begin{theorem}
The Olive tree $(T_{k})$ is a difference graph.
\end{theorem}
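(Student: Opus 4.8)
The plan is to produce an explicit injective labeling in the same multiplicative spirit as the previous theorems, obtained by gluing two gadgets we already control: a \emph{difference-labeled star} on the root together with the feet of the $k$ branches (Theorem~3.3), and a \emph{doubling chain} $s,2s,4s,\dots$ attached to each foot to stretch it out into a branch of the prescribed length (every edge of such a chain is an edge of the first type in the sense of Proposition~2.1). Write the root as $u_{0}$ and the $i$-th branch as the path $u_{0},w_{i,1},w_{i,2},\dots,w_{i,i}$, so that $w_{i,1}$ is a foot (a neighbour of the root) and the branch beyond the foot is a path on $i-1$ further vertices; branch $1$ is then just the single foot $w_{1,1}$.

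First I would fix the star. Let $a$ be a large positive integer and choose the $k$ foot-labels to be the leaf-labels of a difference-labeled copy of $K_{1,k}$ with centre $a$, i.e.\ pairs $\{b,\,a-b\}$ (with one extra leaf $a/2$ when $k$ is odd), exactly as in the proof of Theorem~3.3; this already makes $u_{0}=a$ adjacent to every foot while keeping the feet pairwise non-adjacent. Then on the foot of the $i$-th branch I would hang the chain $f(w_{i,1}),\,2f(w_{i,1}),\,4f(w_{i,1}),\dots,2^{\,i-1}f(w_{i,1})=f(w_{i,i})$, that is, $f(w_{i,j})=2^{\,j-1}f(w_{i,1})$. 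Each such internal edge $w_{i,j}w_{i,j+1}$ is legitimate because the difference $2^{\,j-1}f(w_{i,1})$ equals $f(w_{i,j})\in S$. What remains free, and what has to be pinned down, is the size of $a$ and the assignment of the branch-lengths $1,\dots,k$ to the feet.

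Next I would verify that the induced adjacency is exactly $E(T_{k})$, organising the argument with Proposition~2.1 and Corollary~3.1 into three cases. (a)~Two vertices of the same branch: a difference $2^{p}s-2^{q}s$ with $p>q+1$ has $2$-adic valuation $q$ and odd part $s_{\mathrm{odd}}\,(2^{p-q}-1)\neq s_{\mathrm{odd}}$, hence is not a label of that chain, and together with the star side-conditions this leaves only the consecutive (first-type) edges. (b)~A branch vertex $w_{i,j}$ with $j\ge2$ and the root: the difference $|2^{\,j-1}f(w_{i,1})-a|$ must avoid $S$, which is a matter of the magnitude choices, while the intended edges $w_{i,1}\sim u_{0}$ come for free from the star. (c)~Two vertices in different branches $i\neq i'$: one must rule out $|2^{p}f(w_{i,1})-2^{q}f(w_{i',1})|\in S$, which is forced by taking $a$ huge and the feet growing fast enough that every such cross-difference is squeezed strictly between two consecutive elements of $S$ or else fails the odd-part test.

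The main obstacle is case~(c) — and, entangled with it, keeping all the labels distinct — \emph{uniformly in $k$}. Since both the number of branches and the branch lengths grow with $k$, the set $S$ grows, and the doubling chains attached to the ``large'' feet $a-b$ shoot well above $a$ while those attached to ``small'' feet $b$ risk landing near $a$; one must choose the parameters (a rapidly increasing sequence of feet, $a$ taken larger than the top of the tallest chain, and a suitable pairing of the lengths $1,\dots,k$ with the feet so the short chains sit on the small feet) so that (a), (b), (c) all hold at once. The bookkeeping is routine but lengthy — the authors themselves remark that even a single instance of the star-type side conditions is ``very lengthy'' to check — so I expect the bulk of the write-up to be the careful calibration of these parameters and the verification that no spurious difference arises.
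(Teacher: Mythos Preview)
Your high-level architecture --- a labeled star at the root, with a doubling chain hanging off each foot --- is the same as the paper's, but you make one design choice that diverges from the paper and creates exactly the obstacle you then struggle with: you place the \emph{largest} label at the root. The paper does the opposite. It sets $f(\text{root})=3$, puts $f(v_1)=6$ (a first-type edge), and for $i\ge 2$ sets
\[
f(v_i)=3+10^{\,i-1},\qquad f(v_{i,j})=2^{\,j-1}\cdot 10^{\,i-1}\quad(1\le j\le i-1).
\]
Thus the edge $\text{root}\,v_i$ is of the second type, witnessed by $f(v_{i,1})=10^{\,i-1}$; the edge $v_i v_{i,1}$ is of the second type, witnessed by the root label $3$; and the rest of the $i$-th branch is the doubling chain $10^{\,i-1},\,2\cdot 10^{\,i-1},\dots,2^{\,i-2}\cdot 10^{\,i-1}$. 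Because each branch lives at its own scale $10^{\,i-1}$ and the root sits \emph{below} everything, the cross-branch differences you worry about in case~(c) are automatically harmless, and no calibration of ``which length goes on which foot'' is needed at all.

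By contrast, your scheme forces the long chains on the large feet $a-b$ to overshoot $a$; then differences like $2(a-b)-a=a-2b$ and mixed differences between a high chain and a low chain have to be excluded one by one, and the assignment of lengths to feet becomes part of the data. You acknowledge this and defer it (``routine but lengthy''), but as written the proposal never fixes $a$, never fixes the $b_i$, never fixes the length-to-foot bijection, and never carries out the exclusion --- so there is no proof yet, only a plan whose hardest step is precisely the one left open. If you want to rescue your approach, the cleanest fix is to imitate the paper's separation-of-scales idea on the leaf side rather than the centre side: take the feet to be, say, $10,10^{2},\dots,10^{k}$ and the root to be their common small difference witness, so that the doubling chains $2^{j}\cdot 10^{i}$ never collide across branches. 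That is essentially what the paper does.
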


\begin{proof}
Assume that the Olive tree described as indicated in Fig.\ref{figure75}.
\begin{figure*}[h!]

    \centering

        \begin{tikzpicture}
  [scale=.58,auto=center,every node/.style={circle,fill=blue!20}]

\node(u0) at ( 0,0){Root};

\node(v1) at (-6,-2){$v_1$};
\node(v2) at (-3,-2){$v_2$};
\node(v3) at (0,-2){$v_3$};
\node(v4) at (3,-2){$v_4$};
\node(vn) at (6,-2){$v_n$};

\node(v21) at (-3,-4){$v_{2,1}$};
\node(v31) at (0,-4){$v_{3,1}$};
\node(v41) at (3,-4){$v_{4,1}$};
\node(vn1) at (6,-4){$v_{n,1}$};

\node(v32) at (0,-6){$v_{3,2}$};
\node(v42) at (3,-6){$v_{4,2}$};
\node(vn2) at (6,-6){$v_{n,2}$};

\node(v43) at (3,-8){$v_{4,3}$};
\node(vn3) at (6,-8){$v_{n,3}$};

\node(vn-1) at (6,-11){$v_{n,n-1}$};

\draw(u0) -- (v1);
\draw(u0) -- (v2);
\draw(u0) -- (v3);
\draw(u0) -- (v4);
\draw(u0) -- (vn);

\draw(v2) -- (v21);
\draw(v3) -- (v31);
\draw(v4) -- (v41);
\draw(vn) -- (vn1);

\draw(v32) -- (v31);
\draw(v42) -- (v41);
\draw(vn2) -- (vn1);

\draw(v42) -- (v43);
\draw(vn2) -- (vn3);

\draw [thick, dotted , -] (vn3) -- (vn-1);
\end{tikzpicture}
\caption{Olive tree.}\label{figure75}
    \end{figure*}
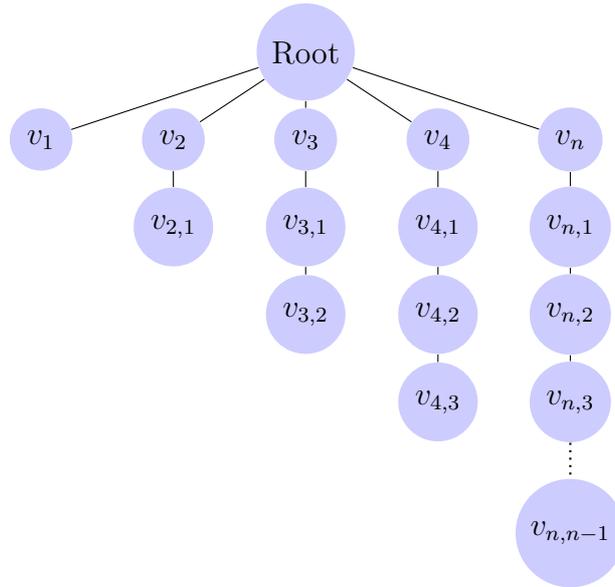%
Assume that the labeling function $f:V(T_{k})\longrightarrow S$ \ is defined as follows:
\begin{eqnarray*}
f(root) &=& 3. \\
f(v_{1}) &=& 6.\\
f(v_{i}) &=& 3+10^{i-1}, \ i=2,3,\ldots ,n.\\
f(v_{i,j}) &=& 2^{j-1} \cdot 10^{i-1},\ j=1,2,\ldots ,i-1.
\end{eqnarray*} \end{proof}

\begin{example}
A difference labeling of the olive tree  $(T_{5})$ labelling is illustrated in Fig.\ref{figure77}.
\end{example}
\begin{figure*}[h!]
    \centering
        \begin{tikzpicture}
  [scale=.58,auto=center,every node/.style={circle,fill=blue!20}]

\node(u0) at ( 0,0){3};

\node(v1) at (-6,-2){6};
\node(v2) at (-3,-2){13};
\node(v3) at (0,-2){103};
\node(v4) at (3,-2){1003};
\node(vn) at (6,-2){10003};

\node(v21) at (-3,-4){10};
\node(v31) at (0,-4){$10^2$};
\node(v41) at (3,-4){$10^3$};
\node(vn1) at (6,-4){$10^4$};

\node(v32) at (0,-6){2 $10^2$};
\node(v42) at (3,-6){2 $10^3$};
\node(vn2) at (6,-6){2 $10^4$};

\node(v43) at (3,-8){4 $10^3$};
\node(vn3) at (6,-8){4 $10^4$};

\node(vn-1) at (6,-11){8 $10^4$};

\draw(u0) -- (v1);
\draw(u0) -- (v2);
\draw(u0) -- (v3);
\draw(u0) -- (v4);
\draw(u0) -- (vn);

\draw(v2) -- (v21);
\draw(v3) -- (v31);
\draw(v4) -- (v41);
\draw(vn) -- (vn1);

\draw(v32) -- (v31);
\draw(v42) -- (v41);
\draw(vn2) -- (vn1);

\draw(v42) -- (v43);
\draw(vn2) -- (vn3);

\draw(vn3) -- (vn-1);
\end{tikzpicture}\
\caption{Olive tree labeling.}\label{figure77}
\end{figure*}
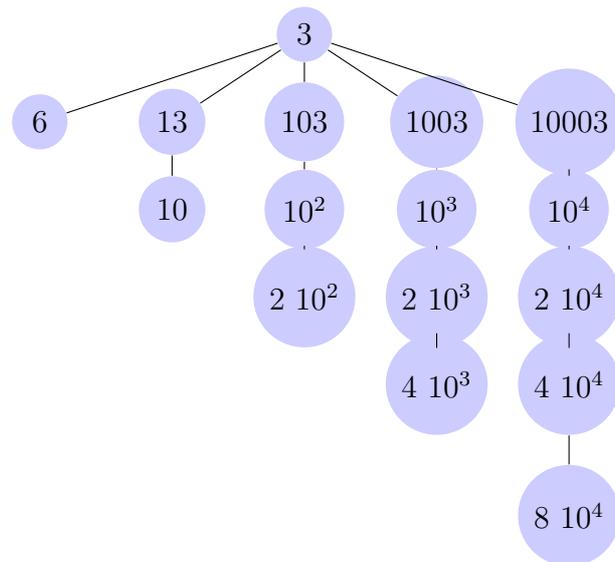

\newpage

\begin{theorem}
The Complete bipartite graph $(K_{2,4})$ is not a difference graph.
\end{theorem}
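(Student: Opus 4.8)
The plan is to assume for contradiction that $K_{2,4}$ carries a difference labeling $f$ with signature $S$, $|S|=6$, and then to eliminate every possibility for $f$. Write the two vertices of degree $4$ as $a_1,a_2$ with $f(a_1)=p>q=f(a_2)$, and the four vertices of degree $2$ as $b_1,\dots,b_4$ with $f(b_1)=c_1<c_2<c_3<c_4=f(b_4)$, so that $S=\{p,q,c_1,c_2,c_3,c_4\}$. The edges and non-edges of $K_{2,4}$ become the working hypotheses
\[
|p-c_i|\in S,\qquad |q-c_i|\in S\quad(1\le i\le 4),
\]
together with $|p-q|\notin S$ and $|c_i-c_j|\notin S$ for all $i\neq j$. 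The rest of the proof draws arithmetic consequences from these and shows they are always inconsistent; Corollary~3.1 and Proposition~2.1 are the tools that keep the number of configurations finite. The top-level split is according to which vertex carries $\max S$: it is either $a_1$ (label $p$) or $b_4$ (label $c_4$).

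\textbf{Case 1: $p=\max S$.} Then $p-c_1>p-c_2>p-c_3>p-c_4>0$ are four distinct members of $S$; each is $<p$, so none equals $p$, and none equals $q$, because $p-c_i=q$ would put $c_i=p-q$ in $S$ against $|p-q|\notin S$. Hence $\{p-c_1,p-c_2,p-c_3,p-c_4\}=\{c_1,c_2,c_3,c_4\}$, and comparing the two orderings this identification must reverse them, giving
\[
c_1+c_4=c_2+c_3=p.
\]
I would then case on the location of $q$ among $c_1<c_2<c_3<c_4<p$ (five intervals). In each interval, feeding the four conditions $|q-c_i|\in S$ into the now very rigid set $S$ pins the $c_i$ down to small integer multiples of one parameter — for example $q<c_1$ forces $c_i=(i+1)q$ and $p=7q$ — whereupon a forbidden coincidence $|c_i-c_j|\in S$ (here $c_2-c_1=q\in S$) or a violated condition $|q-c_i|\notin S$ surfaces. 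The interval $c_4<q<p$ is instant: the same order-reversal argument applied to $q$ gives $c_1+c_4=c_2+c_3=q$, hence $p=q$.

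\textbf{Case 2: $c_4=\max S$.} Since $b_4$ has even degree $2$, Corollary~3.1(iv) says no neighbour of $b_4$ is labelled $c_4/2$; with Corollary~3.1(iii) this rules out $c_4=2x$ and forces $c_4=x+y$ for distinct $x,y\in S$. Then $|c_4-x|=y\in S$ and $|c_4-y|=x\in S$, so $b_4$ is adjacent to the vertices labelled $x$ and $y$; as $b_4$'s only neighbours are $a_1,a_2$ we conclude $\{x,y\}=\{p,q\}$, i.e. $c_4=p+q$. What remains is to treat the finitely many (about ten) interleavings of $c_1<c_2<c_3$ with $q<p$, all of them below $c_4$. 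For each such arrangement, the relations $|p-c_i|\in S$ and $|q-c_i|\in S$ for $i\le 3$ (those for $i=4$ are automatic once $c_4=p+q$) together with $S=\{c_1,c_2,c_3,p,q,p+q\}$ determine the labels up to one overall scale, after which some $|c_i-c_j|$ falls in $S$ or some required $|p-c_i|$ (or $|q-c_i|$) fails to. This exhausts all cases and proves the theorem.

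The obstruction is purely combinatorial rather than conceptual: there is no single clean invariant, and each of the roughly fifteen sub-cases has to be run through the arithmetic until a forbidden difference appears — which is exactly why the abstract flags this proof as very lengthy.
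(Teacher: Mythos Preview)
The paper does not actually supply a proof of this theorem: its entire argument reads ``There are many cases which are too involved. The reader can contact the second author for details.'' So there is no concrete proof to compare your proposal against; at most one can say that the authors, like you, envisage a lengthy case analysis driven by Corollary~3.1.

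That said, your plan is sound and your two top-level reductions are correct. In Case~1 the order-reversal argument forcing $c_1+c_4=c_2+c_3=p$ is valid (the four values $p-c_i$ are distinct, lie in $S\setminus\{p\}$, and none can equal $q$ since that would put $p-q$ into $S$), and your worked sub-case $q<c_1$ (yielding $c_i=(i+1)q$, $p=7q$, and the forbidden $c_2-c_1=q\in S$) and the sub-case $c_4<q<p$ both check out. In Case~2 the parity argument via Corollary~3.1(iv) is right and can in fact be sharpened: from $c_4-p\in S$ one rules out $c_4-p=p$ (else $c_4/2\in S$) and $c_4-p=c_i$ (else $c_4-c_i=p\in S$, creating a forbidden $b_ib_4$ edge), so $c_4-p=q$ directly. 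What you have written is a credible outline rather than a proof: the dozen or so remaining sub-cases are only asserted to collapse, and in this kind of exhaustive analysis one or two of them typically need an extra observation before a contradiction surfaces. Given that the paper itself declines to print the argument, your sketch is already more informative than what appears there.
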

\begin{proof}
There are many cases which are too involved. The reader can contact the second author for details.
\end{proof}

\section*{Statements and Declarations}
\textbf{Funding}\\
Not applicable\\ \\
\textbf{Conflict of interest}\\
The authors declare that they have no competing interests.

\end{document}